\numberwithin{equation}{section}
\theoremstyle{plain}
\newtheorem{thm}{Theorem}[section]
\newtheorem{lemma}[thm]{Lemma}
\newtheorem{prop}[thm]{Proposition}
\newtheorem{cor}[thm]{Corollary}
\theoremstyle{definition}
\newtheorem{exmp}[thm]{Example}
\theoremstyle{remark}
\newtheorem{rmk}[thm]{Remark}
\newcommand{\Mp}{{\mathrm{Mp}}}
\newcommand{\Nm}{{\mathrm{Nm}}}
\newcommand{\SO}{{\mathrm{SO}}}
\newcommand{\sgn}{\mathrm{sgn}}
\newcommand{\Bb}{\mathbb{B}}
\newcommand{\Cb}{\mathbb{C}}
\newcommand{\Hb}{\mathbb{H}}
\newcommand{\Nb}{\mathbb{N}}
\newcommand{\Qb}{\mathbb{Q}}
\newcommand{\Rb}{\mathbb{R}}
\newcommand{\Zb}{\mathbb{Z}}
\newcommand{\ebf}{\mathbf{e}}
\newcommand{\af}{\mathfrak{a}}
\newcommand{\bfrak}{\mathfrak{b}}
\newcommand{\df}{\mathfrak{d}}
\newcommand{\Dc}{{\mathcal{D}}}
\newcommand{\Oc}{\mathcal{O}}
\newcommand{\varep}{\varepsilon}
\newcommand\smod[1]{\ (\operatorname{mod} #1)}
\newcommand{\lp}{\left (}
\newcommand{\rp}{\right )}
\newcommand{\tc}{\tilde{c}}
\newcommand{\tD}{\tilde \Delta}
\newcommand{\e}{\epsilon}
\newcommand{\hv}{\hat{\vartheta}}
\newcommand{\tv}{\tilde{\vartheta}}
\newcommand{\tbe}{\tilde{\beta}}
\begin{document}
\title[]{Mock Maass Forms Revisited}
\author{Yingkun Li}
\author{Christina Roehrig}
\address{Fachbereich Mathematik,
Technische Universit\"at Darmstadt, Schlossgartenstrasse 7, D--64289
Darmstadt, Germany}
\email{li@mathematik.tu-darmstadt.de}
\email{roehrig@mathematik.tu-darmstadt.de}
\date{\today}
\begin{abstract}
  In this paper, we use theta integrals to give a different construction of mock Maass forms studied by Sander Zwegers.
  With this method, we construct new real-analytic modular forms, whose Fourier coefficients are logarithms of algebraic numbers in a real quadratic field.
\end{abstract}
\maketitle
 \makeatletter
 \providecommand\@dotsep{5}
 \def\listtodoname{List of Todos}
 \def\listoftodos{\@starttoc{tdo}\listtodoname}
 \makeatother

%\tableofcontents
%\listoftodos

\section{Introduction}
In \cite{Co88}, Cohen shows that the function
\begin{equation*}
	\varphi_0(\tau):=\sqrt{v}\sum_{\substack{n\in\Zb\\n\neq 0}} T(n)\,e^{2\pi inu/24}\,K_0(2\pi|n|v/24)\quad(\tau=u+iv\in\Hb)
\end{equation*}
is a weight 0 Maass form, i.e.\ it transforms as a modular form of weight 0 on $\Gamma_0(2)$ up to some explicitly given multiplier and it is an eigenfunction of the weight 0 Laplace operator $\Delta_\tau$ defined in \eqref{eq:tD}. Here, $K_0$ denotes a modified Bessel function of the second kind, which makes immediately clear that the second property holds (see \eqref{eq:K0} for the exact definition). 
As an arithemtic function, $T(n)$ is defined for any $n\equiv 1\smod{24}$ and describes the excess of the number of inequivalent solutions $(x,y)$ of Pell's equation $x^2-6y^2=n$ with $x+3y\equiv\pm 1\smod{12}$ over the number of inequivalent solutions with $x+3y\equiv\pm 5\smod{12}$.

One can also establish the relation
\begin{equation*}
	\sum_{\substack{n\in\Zb\\n\equiv 1\smod{24}}} T(n)\,q^{|n|/24}=q^{1/24}\sigma(q)+q^{-1/24}\sigma^\ast(q),
\end{equation*}
where $\sigma$ and $\sigma^\ast$ are the $q$-hypergeometric functions defined by
\begin{align*}
	\sigma(q)&:=\sum_{n\geq 0}\frac{q^{n(n+1)/2}}{(1+q)\dots (1+q^n)}=1+\sum_{n\geq 0}(-1)^nq^{n+1}(1-q)\ldots(1-q^n)\\
	&=1+q-q^2+2q^3-2q^4+q^5+\ldots,
\end{align*}
respectively
\begin{align*}
	\sigma^\ast(q)&:=2\sum_{n\geq 1}\frac{(-1)^nq^{n^2}}{(1-q)(1-q^3)\dots (1-q^{2n-1})}=-2\sum_{n\geq 0}q^{n+1}(1-q^2)(1-q^4)\ldots(1-q^{2n})\\
	&=-2q-2q^2-2q^3+2q^7+2q^8+2q^{10}+\ldots .
\end{align*}
These functions can be rewritten as indefinite theta functions.
Zwegers \cite{Zw12} used this to put Cohen's example in a more general framework and interpreted $\varphi_0$ as the ``harmonic'' part $\Phi^{c_1, c_2}_{a,b}$ of a real-analytic modular form $\hat\Phi^{c_1, c_2}_{a,b}$, where harmonic means that the function vanishes under the modified Laplace operator
\begin{equation}
	\label{eq:tD}
	\tD := \Delta_\tau + \frac{1}{4},~
	\Delta_\tau := v^2 (\partial_u^2 + \partial_v^2).
\end{equation}
In general, Zwegers showed that $\tD \hat\Phi^{c_1, c_2}_{a,b}$ is the difference of two theta series $\theta^{(1, 1)}$ depending on negative lines $c_1$ and $c_2$ respectively (see Theorem \ref{thm:Zw-diff} for details).

In this paper, we will use the theta integral approach as in \cite{FK17, CL20} to extend these results and construct a preimage $\tv$ of \textit{one} such theta function $\theta^{(1, 1)}$ 
under the operator $\tD$.
Furthermore, the harmonic part Fourier coefficients of $\tv$ are logarithms of algebraic numbers in real quadratic fields, similar to the ones studied in \cite{CL20}.
% Here $P_i$ is a unary lattice. 
% The construction for $r = 1 = \e$ is the one given by Zwegers in \cite{Zw12}.
%In this paper, we will use the theta integral approach as in \cite{CL20}.

\begin{thm}
  \label{thm:main-intro}
  Let $L$ be an anisotropic lattice of signature $(1, 1)$ and $\theta^{(1, 1)}_{L+h}(\tau, t)$ the theta function defined in \eqref{eq:theta11} for $h \in L^*/L$.
  Then there exists $\tv_{L+h}(\tau, t)$ such that
  \begin{equation}
    \label{eq:diff-eq-intro}
  \tD \tv_{L+h}(\tau, t) = \theta^{(1, 1)}_{L+h}(\tau, t)    
  \end{equation}
 for all $t > 0$, and whose harmonic part coefficients $\tc_t$ are given in \eqref{eq:cLam}.
  Furthermore, we have $\Phi^{c_1, c_2}_{h, 0} = \hv_{L+h}^{c_1, c_2}(\tau) = \tv_{L+h}(\tau, t_2) - \tv_{L+h}(\tau, t_1)$ when $c_i = c(t_i)$ via \eqref{eq:c} with $\hv$ defined in \eqref{eq:int-c1-c2}. 
  \end{thm}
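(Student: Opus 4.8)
\emph{Proof proposal.} The plan is to realize $\tv_{L+h}$ as a theta integral, in the spirit of \cite{FK17,CL20}. Write $D$ for the one-dimensional Grassmannian of negative lines in $\VR$, parametrized by $t\mapsto c(t)$ via \eqref{eq:c}. I would attach to $L+h$ a weight-$0$ Schwartz $1$-form $\varphi$ on $\VR\times D$ --- the weight-$0$ analogue of the Funke--Kudla Schwartz form of \cite{FK17} --- whose pointwise kernel on $\VR$ is built from an exponential-integral / incomplete-Gamma type function with a logarithmic singularity (the weight-$0$ analogue of the special function appearing in \cite{Zw12}), so that $\varphi$ has integrable logarithmic singularities along the special points $D_\lambda\subset D$ attached to lattice vectors $\lambda$. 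The key local identity to establish is a Kudla--Millson-type equation
\[
  \tD\,\theta_{\varphi}(\tau,c(s))\;=\;\tfrac{\partial}{\partial s}\,\theta^{(1,1)}_{L+h}(\tau,s),
\]
which, after commuting $\tD=\Delta_\tau+\tfrac14$ past the theta kernel by Vign\'eras' lemma, reduces to a finite computation with the Gaussian $e^{-2\pi vQ_{c(s)}(\cdot)}$, the polynomial appearing in \eqref{eq:theta11}, and the $s$-derivative of the orthogonal projection onto $c(s)$. One then sets
\[
  \tv_{L+h}(\tau,t)\;:=\;\int_{s_0}^{t}\theta_{\varphi}(\tau,c(s)),
\]
the integral along the path $s\mapsto c(s)$ in $D$, with base point $s_0$ at a boundary of the parameter range; changing $s_0$ alters $\tv_{L+h}$ only by a $\tD$-harmonic theta series, so fixing $s_0$ is precisely what normalizes $\tc_t$.

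Granting the local identity, the fundamental theorem of calculus gives
\[
  \tD\,\tv_{L+h}(\tau,t)\;=\;\theta^{(1,1)}_{L+h}(\tau,t)-\lim_{s\to s_0}\theta^{(1,1)}_{L+h}(\tau,s),
\]
so \eqref{eq:diff-eq-intro} follows once the boundary term at $s_0$ is shown to vanish. Here anisotropy of $L$ is essential: since $Q$ represents no zero, the majorant $Q_{c(s)}(\lambda)$ tends to $\infty$ for every fixed $\lambda\ne0$ as $c(s)$ approaches the boundary of $D$, so the limiting theta series vanishes (the polynomial factor disposing of the $\lambda=0$ term), and the $s$-integral converges there. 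Making this rigorous --- a uniform-in-$s$ majorant bound for $\theta_\varphi(\tau,c(s))$ near $s_0$, legitimizing the interchange of $\int_{s_0}^t$ with $\sum_{\lambda\in L+h}$ and with $\tD$, and ensuring integrability of the logarithmic singularities of $\varphi$ along the $D_\lambda$ crossed by the path --- is the main technical hurdle, which I would handle as in \cite{FK17,CL20}, using anisotropy and the explicit growth of the special-function building block of $\varphi$.

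For the Fourier expansion and the harmonic part, I would unfold $\theta_\varphi(\tau,c(s))$ by grouping the $\lambda\in L+h$ with $Q(\lambda)=m$ and integrating in $s$ coefficientwise, obtaining $\tv_{L+h}(\tau,t)=\sum_m a_m(v,t)\,e(mu)$. For $m\ne0$ the homogeneous equation for $a_m$ becomes, after $a_m=v^{1/2}b_m(v)$, the modified Bessel equation of order $0$ (solutions $v^{1/2}K_0(2\pi|m|v)$ and $v^{1/2}I_0(2\pi|m|v)$), and for $m=0$ it is $v^2a_0''+\tfrac14 a_0=0$ (solutions $v^{1/2}$ and $v^{1/2}\log v$); the non-elementary part of $a_m$ is the particular solution forced by $\theta^{(1,1)}_{L+h}$, while the $\ker\tD$-part carries the constants of integration, which are the $\tc_t$. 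The arithmetically substantive point --- that $\tc_t$ is a combination of logarithms of \emph{algebraic} numbers rather than of general transcendental constants --- should emerge from the logarithmic singularities of $\varphi$ at the special points $D_\lambda$, whose coordinates lie in the real quadratic field $\Qb(\sqrt{\disc})$ attached to $L$, together with the regularization of the $s$-integral over an orbit of the (infinite cyclic) automorphism group of $Q$, which contributes logarithms of the fundamental automorph; making this explicit yields \eqref{eq:cLam}.

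Finally, for the comparison with Zwegers, put $\hv^{c_1,c_2}_{L+h}(\tau):=\tv_{L+h}(\tau,t_2)-\tv_{L+h}(\tau,t_1)=\int_{t_1}^{t_2}\theta_\varphi(\tau,c(s))$ --- this is \eqref{eq:int-c1-c2}, and the base point $s_0$ now cancels, so it is manifestly better behaved than $\tv_{L+h}$ alone. Then \eqref{eq:diff-eq-intro} gives $\tD\,\hv^{c_1,c_2}_{L+h}=\theta^{(1,1)}_{L+h}(\tau,t_2)-\theta^{(1,1)}_{L+h}(\tau,t_1)$, which by Theorem \ref{thm:Zw-diff} equals $\tD\,\Phi^{c_1,c_2}_{h,0}$; hence $\hv^{c_1,c_2}_{L+h}-\Phi^{c_1,c_2}_{h,0}\in\ker\tD$. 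To conclude this difference is $0$ I would match Fourier expansions: both sides are periodic in $u$ with the same principal part, their harmonic coefficients agree by the telescoping computation above and Zwegers' explicit formulas, and their non-harmonic parts agree since both are determined by the common value of $\tD$; a $\tD$-harmonic function with trivial principal part and matching constant term vanishes. A cleaner route, which I would attempt first, is to expand $\int_{t_1}^{t_2}\varphi(\cdot\,;c(s))$ directly and recognize it as the interpolating Schwartz function of \cite{Zw12} that bridges the two sign conditions at $c_1$ and $c_2$, giving $\hv^{c_1,c_2}_{L+h}=\Phi^{c_1,c_2}_{h,0}$ termwise. I expect the matching of all normalizations, multiplier systems and conventions with \cite{Zw12} to be the most bookkeeping-intensive step, while the genuinely delicate part is the convergence and boundary analysis of the second paragraph.
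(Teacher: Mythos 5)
There is a genuine gap at the heart of your construction: the choice of base point $s_0$ ``at a boundary of the parameter range''. For an anisotropic $L$ the group $\Gamma_L\cong\Zb$ of totally positive units acts on $\Dc\cong\Rb^\times_+$ by $s\mapsto \varep_L s$ and preserves $L+h$, so $\theta^{(1,1)}_{L+h}(\tau,s)$ and any theta kernel $\theta_\varphi(\tau,c(s))$ built from $L+h$ are \emph{periodic} in $\log s$ with period $\log\varep_L$. Hence $\theta^{(1,1)}_{L+h}(\tau,s)$ does not tend to $0$ as $s\to 0$ or $s\to\infty$ unless it vanishes identically (and Example \ref{ex:non-trivial} shows it need not), the boundary term $\lim_{s\to s_0}\theta^{(1,1)}_{L+h}(\tau,s)$ does not exist, and the integral $\int_{s_0}^{t}\theta_\varphi(\tau,c(s))$ diverges. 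Your justification --- that the majorant $Q_{c(s)}(\lambda)\to\infty$ for each fixed $\lambda\neq 0$ --- only gives termwise decay; the sum over $L+h$ is rearranged by the unit action and stays bounded away from $0$ along a subsequence. Anisotropy is in fact the \emph{source} of this obstruction (it forces $\Gamma_L$ to be infinite and $\Gamma_L\backslash\Dc^+$ to be a circle with no boundary), not the cure. Without a regularization of the divergent integral, \eqref{eq:diff-eq-intro} is not established.

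The paper circumvents exactly this by a different device: $\tv^{c_0}_{L+h}(\tau):=\int_{t_0}^{t_0\varep_L}\theta_{L+h}(\tau,t)\log(t)\tfrac{dt}{t}$, i.e.\ the Siegel kernel integrated over a \emph{single period} against the non-periodic weight $\log t$ (see \eqref{eq:tv}). Integration by parts together with $4\tD\theta_{L+h}=(t\partial_t)^2\theta_{L+h}$ and the $\Gamma_L$-periodicity of $\theta_{L+h}(\tau,t)$ makes all boundary terms cancel except $\log(\varep_L)\,(t\partial_t\theta_{L+h})(\tau,t_0)=-8\pi\log(\varep_L)\,\theta^{(1,1)}_{L+h}(\tau,t_0)$ (Lemmas \ref{lemma:diffop}, \ref{lemma:tDP}), which gives \eqref{eq:tD-tv}; and the substitution $t\mapsto\varep_L t$ yields the telescoping identity $\tv^{c_1}_{L+h}-\tv^{c_2}_{L+h}=-\log(\varep_L)\hv^{c_1,c_2}_{L+h}$, whence the comparison with Zwegers' $\hat\Phi^{c_1,c_2}_{h,0}$. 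The harmonic coefficients \eqref{eq:cLam} then come not from singularities of a Kudla--Millson form but from the explicit asymptotic comparison of $\sum_n\tbe_{t_0}(\lambda_n\sqrt v)$ with $K_0(2\pi|Q_0(\lambda)|v)$ in Proposition \ref{prop:compare}, where the $\log t$ weight evaluated at the special parameters $t=t_0|\lambda'/\lambda|^{1/2}$ produces the terms $\tfrac12\log|\lambda_0/\lambda_0'|$ and the periodic Bernoulli polynomial. Your final paragraph (identifying $\hv^{c_1,c_2}_{L+h}$ with $\hat\Phi^{c_1,c_2}_{h,0}$ by unfolding the integral representation termwise) is sound and is what the paper does in Section \ref{subsec:mock1}; but the existence of a single-$t$ preimage $\tv_{L+h}(\tau,t)$, which is the new content of the theorem, requires the $\log t$ regularization or an equivalent substitute.
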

\begin{rmk}Similar results should also hold in  the isotropic case, as in \cite{Li17}. 
\end{rmk}

\begin{rmk}
%%  In Examples \ref{ex:cohen} and below, we show that our construction agrees with that of Zwegers. Further
  In Examples  \ref{ex:cohen2} and  \ref{ex:non-trivial}, we show that our result goes beyond the work of Zwegers and produces a new type of modular form. 
\end{rmk}
The paper is organized in the following way. We recall preliminary notions of theta functions in Section 2.
%\todo{I capitalized Section whenever it comes with a number in the whole file}
Section 3 contains our main construction. The first half rephrases the construction of Zwegers. The second half extends it and proves Theorem \ref{thm:main}, which implies Theorem \ref{thm:main-intro}.
The last section proposes a generalization of the construction in Section 3, and some future directions.

{\bf Acknowledgement}: 
The authors are supported by the Deutsche Forschungsgemeinschaft (DFG) through the Collaborative Research Centre TRR 326 \emph{Geometry and Arithmetic of Uniformized Structures}, project number 444845124.
\section{Preliminaries}
\label{sec:2}
% We want to construct preimages of 
In this section, we recall preliminary notions of quadratic space, unary theta functions and theta kernels that will be used in constructing mock Maass forms later. As usual, we denote $\ebf(z):=e^{2\pi iz}$.%\todo{added $\ebf(z)$}

Let $L$ be an even, integral lattice of rank $n$ with a quadratic form $Q$ and associated bilinear form $B(\cdot, \cdot)$, and denote $L^* \subset V := L \otimes \Qb$ its dual lattice.
When $Q$ is positive definite, we have the theta function
\begin{equation}
  \label{eq:thetaLh}
  \theta_{L+h, p}(\tau) :=   \sum_{x \in L+h} p(x) \ebf(Q(x)\tau)\quad (h \in V),
\end{equation}
where $p: V (\Rb) \to \Cb$ is a harmonic polynomial of degree $d$.
We omit the subscript $p$ when it is the constant 1.
The theta function is a modular form of weight $d + n/2$ and a congruence subgroup of $\Mp_2(\Rb)$.
The vector-valued modular form
\begin{equation}
  \label{eq:ThetaL}
\Theta_{L, p}(\tau) = (\theta_{L+h, p}(\tau))_{h \in L^*/L}  
\end{equation}
transforms with respect to the Weil representation $\rho_L$ on $\Mp_2(\Zb)$. 
See \cite{Bo98, Sc09} for more information about $\rho_L$.
When $Q$ is negative definite, we set
\begin{equation}
  \label{eq:theta-neg}
  \theta_{L+h, p}(\tau) := v^{d + n/2} \theta_{L^-+h, p}(-\overline\tau),
\end{equation}
where $L^-$ denotes the lattice $L$ with the quadratic form $-Q$. 
When $L$ has rank 1, the polynomial $p$ is necessarily linear and we denote
\begin{equation}
  \label{eq:unary}
  \theta_{L+h}^{(\e)} := \theta_{L+h, x^\e},~
  \Theta_{L}^{(\e)} := \Theta_{L, x^\e}
\end{equation}
for $\e \in \{0, 1\}$.

When $Q$ is not definite, i.e.\ of signature $(n^+, n^-)$ with $n^\pm \ge 1$, one can attach to $L$ a two variable theta function in the following way.
Let $\Dc$ be the Grassmannian of negative $n^-$-planes in $V(\Rb) \cong  \Rb^{n^+} \oplus (\Rb^{n^-})^-$ with $\Rb^n$ the definite $\Rb$-quadratic space with quadratic form
$Q((x_1, \dots, x_n)) := \sum_{1 \le i \le n} x_i^2/2$.
%\todo{I suggest $Q$ instead of $Q_n$ to avoid confusion with $Q_0$}
%%% \todo[inline,color=yellow]{YL: specify quadratic form on $\Rb^{n^+, n^-}$. Use $Q = \sum x_i^2$ or $Q = \sum x_i^2/2$? CR: I would suggest $Q = \sum x_i^2/2$ to make it coherent with Borcherds' results and Cohen's function as example.}
For each $Z \in \Dc$ and $\lambda \in V(\Rb)$, let $Z^\perp \subset V(\Rb)$ be its orthogonal complement and $\lambda_Z, \lambda_{Z^\perp}$ the projections of $\lambda$ such that $Q(\lambda)=Q(\lambda_{Z^\perp})+Q(\lambda_{Z})$. Then the Siegel theta function given by
\begin{equation}
  \label{eq:Siegel}
  \theta_{L+h}(\tau, Z) :=
v^{n^-/2}
  \sum_{\lambda \in L + h}
  \ebf\lp Q(\lambda_{Z^\perp}) \tau +  Q(\lambda_{Z}) \overline\tau\rp
\end{equation}
is a real-analytic modular form in $\tau$ of weight $\frac{n^+-n^-}{2}$.
More generally, one can add a harmonic polynomial $p$ of degree $d^+$, resp.\ $d^-$, in the coordinates of ${Z^\perp} \cong \Rb^{n^+}$, resp.\ $Z \cong(\Rb^{n^-})^-$, to define $\theta_{L+h, p}$, which will be a real-analytic modular form of weight $\frac{n^+-n^-}{2} + d^+ - d^-$.

This is a special case of the theta series considered by Borcherds. In Theorem 4.1 in \cite{Bo98}, the explicit transformation behavior is given.
%\todo[color=yellow]{CR: Added the reference to Borcherds here for the modular properties}
For example, when $V$ is indefinite of dimension 2, it will have signature $(1,1)$ and $V(\Rb)$ is isometric to the real hyperbolic plane $V_0(\Rb) = \Rb^2$ with quadratic form $Q_0$ given by
\begin{equation}
  \label{eq:Q0}
Q_0((x_1, x_2)) := x_1 x_2.
\end{equation}
Let $B_0$ be the associated bilinear form.
The symmetric space $\Dc$ of $V_0$ is parametrized by $\Rb^\times$ via
\begin{equation}
  \label{eq:c}
  \begin{split}
  c: \Rb^\times &\to \Dc \\
t &\mapsto \Rb \cdot Z^-_t,~
  \end{split}
\end{equation}
where we denote 
\begin{equation}
  \label{eq:Zt}
   Z^\pm_t :=    \lp {t}{}, \pm {t^{-1}}{} \rp \in V_0(\Rb).
\end{equation}
The space $\Dc$ has two  connected components, and we denote by $\Dc^+$ the component parametrized by $\Rb^\times_+$ under $c$.
For any $\lambda = (\lambda_1, \lambda_2) \in V_0(\Rb)$, we can write it as
\begin{equation}
  \label{eq:lambda-decomp}
  \lambda =   \lambda_t^+ Z_t^+ - \lambda_t^- Z_t^-,~
  \lambda_t^\pm
  :=  \frac{  B_0(\lambda, Z_t^\pm)}{{2}} 
=  \frac{ \pm  \lambda_1 t^{-1} + \lambda_2 t}{{2}} \in \Rb,
\end{equation}
which satisfy the equations
\begin{equation}
  \label{eq:lam-id}
  (  \lambda^+_t)^2 + (  \lambda^-_t)^2 = \frac{  \lambda^2_1 t^{-2} + \lambda^2_2 t^2}{{2}},~
  (  \lambda^+_t)^2 - (  \lambda^-_t)^2 =  \lambda_1\lambda_2 = Q_0(\lambda).
\end{equation}
% Define also
% \begin{equation}
%   \label{eq:Z+}
% Z^+_t := \frac{1}{\sqrt{2}} \lp t, t^{-1} \rp \in (Z^-_t)^\perp.  
% \end{equation}

% \begin{exmp}
%   \label{exmp:L}

For a lattice $L \subset V = V_0$, the theta function $\theta_{L+h, p}$ can be explicitly written down as
\begin{equation}
\label{eq:theta_explicit}
\theta_{L+h, p} (\tau, t) =
% \frac{\sqrt{v}}{\sqrt{2AM}}
% \sqrt{v}
v^{d^-+1/2}
\sum_{
  \begin{subarray}{c}
\lambda \in L +h
  \end{subarray}}
p(\lambda_t^+, \lambda_t^-)
\ebf\lp
%\frac{ (\lambda t^{-1} + \lambda' t)^2 \tau - (\lambda t^{-1} -  \lambda't)^2 \overline{\tau}}{4AM}
  (\lambda_t^+)^2 \tau -   (\lambda_t^-)^2 \overline{\tau} 
\rp.
\end{equation}
Note that the only harmonic polynomials on $V(\Rb) \cong \Rb^{1, 1}$ have degree $(d^+, d^-)$ with $d^\pm  \le 1$.
We denote
\begin{equation}
  \label{eq:theta11}
  \theta_{L+h}^{(d^+, d^-)}(\tau, t) :=   \theta_{L+h, x^{d^+} y^{d^-}}(\tau, t)
\end{equation}
where $(x, y) \in \Rb^{1, 1}$. 
When $t = t_0\in \Rb^\times$ satisfies $t_0^2 \in \Qb^\times$, we have a rational splitting $V = U \oplus U^\perp$ with $U$ positive definite such that $U^\perp(\Rb) = c(t_0)$, and $P \oplus N \subset L$ is a finite index sublattice with $P := L \cap U$ and $N := L \cap U^\perp$ definite lattices.
The theta function $\theta_{L+h}^{(d^+, d^-)}$ has the following corresponding splitting
%%%\todo[color=yellow,inline]{YL: check constants.CR: I changed $v^{d^-/2+1}$ in \eqref{eq:theta_explicit} to $v^{d^-+1/2}$. Then the next equation is right.}
\begin{equation}
  \label{eq:theta-split}
  \theta^{(d^+, d^-)}_{L+h}(\tau, t_0)=
  \sum_{\substack{\mu \in P^*/P,~ \nu \in N^*/N\\P\oplus N + (\mu, \nu) \subset L + h}}
  \theta^{(d^+)}_{P+ \mu}(\tau)   \theta^{(d^-)}_{N+ \nu}(\tau)
\end{equation}
for $d^\pm \in \{0, 1\}$.

Suppose $L$ is anisotropic. Then
\begin{equation}
  \label{eq:Laf}
 L = L_{\af, M} = M\af 
\end{equation}
for some fractional ideal $\af$ of a real quadratic field $F$ with quadratic form $Q_{\af, M} = \Nm/(AM)$,  $A = \Nm(\af)$ and $M \in \Nb$.
Its dual lattice is $\af\df^{-1}$ with $\df$ the different of $F$.
  The isometry between $V(\Rb)$ and $V_0(\Rb)$ is given by
  %it is the image of the embedding
\begin{equation}
  \label{eq:embedding}
  \begin{split}
    % (L_{\af, M},  Q_{\af, M}) &\hookrightarrow (\VR, Q) \\
\iota: V(\Rb) &\to V_0(\Rb)\\
\lambda &\mapsto \frac{1}{\sqrt{AM}} (\lambda,  \lambda').
  \end{split}
\end{equation}
%\todo{In this construction, we always have $F=\Qb(\sqrt{D})$ for $D>0$ and $M$ is some positive integer?\\ YL: yes.}
Using this isometry, we can give the following explicit expression of
%\todo{Maybe we should write out this expression explicitly?}
%$(L, Q) \cong (L_{\af, M},  Q_{\af, M})$ and
$\theta_{L + h, p}(\tau, t)$ as in \eqref{eq:theta_explicit}
\begin{equation}
  \label{eq:thetaLhp}
  \theta_{L+h, p}(\tau, t) = v^{d^- + 1/2} \sum_{\lambda \in L_{\af, M} + h}
  p\lp
  \frac{ \lambda t^{-1} + \lambda' t}{2\sqrt{AM}}, \frac{ -\lambda t^{-1} + \lambda' t}{2\sqrt{AM}}  \rp
  \ebf(Q_{\af, M}(\lambda)u) \ebf \lp -\pi \frac{(\lambda t^{-1})^2 + (\lambda' t)^2}{AM} \rp.
\end{equation}
The splitting in \eqref{eq:theta-split} happens when $t = t_0 \in \Rb^\times$ satisfies
\begin{equation}
  \label{eq:trat}
  t_0^2 \in F^1:= \{\alpha \in F^\times: \Nm(\alpha) = 1\}.
\end{equation}

%\end{exmp}

For later purpose, we will also require the $K$-Bessel function $K_0(x)$, which is defined by
\begin{equation}
  \label{eq:K0}
K_0(x) := \int^\infty_0 e^{-x \cosh (T)} dT  ,~ x > 0
\end{equation}
and satisfies the second order differential equation $$\lp x\frac{\partial^2}{\partial x^2}+\frac{\partial}{\partial x}-x\rp K_0(x)=0,$$
with the asymptotics
\begin{equation}
  \label{eq:K0inf}
  K_0(x) \sim \sqrt{\pi/ (2x)} e^{-x}
\end{equation}
as $x \to \infty$.
For convenience, we also set
\begin{equation}
  \label{eq:K00}
  K_0(0) := 0
\end{equation}
and state a simple lemma that will be useful later.
\begin{lemma}
  \label{lemma:asymp}
  For any $a \in \Rb$, we denote $K_0(x; a) := \sgn(a) \int_{|a|}^\infty e^{-x \cosh(T)}dT$.
  Then
  \begin{equation}
    \label{eq:asymp}
    \lim_{x \to \infty} \frac{K_0(x; a)}{K_0(x)} = 0.
%1 -    \sgn(a)^2 .
  \end{equation}
\end{lemma}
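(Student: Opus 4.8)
The plan is to exploit that the integrand $e^{-x\cosh T}$ concentrates near $T = 0$ as $x \to \infty$: the full integral $K_0(x) = \int_0^\infty e^{-x\cosh T}\,dT$ sees this concentration point, whereas $K_0(x;a)$ integrates only over $T \ge |a|$, a region bounded away from the maximum when $a \ne 0$. So the whole content is an elementary comparison of exponential rates. First dispose of the case $a = 0$: here $\sgn(a) = 0$, so $K_0(x; a) \equiv 0$ and \eqref{eq:asymp} is immediate. Assume henceforth $a \ne 0$ and set $b := |a| > 0$.

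For the numerator, estimate the tail from above. By convexity of $\cosh$ on $[0,\infty)$ (the graph lies above its tangent line at $b$), $\cosh T \ge \cosh b + (T-b)\sinh b$ for all $T \ge b$, hence
\[
|K_0(x;a)| = \int_b^\infty e^{-x\cosh T}\,dT \le e^{-x\cosh b}\int_b^\infty e^{-x\sinh b\,(T-b)}\,dT = \frac{e^{-x\cosh b}}{x\sinh b}.
\]
For the denominator, estimate $K_0(x)$ from below by restricting the integral to a short interval: fix any $\delta$ with $0 < \delta < b$; since $\cosh$ is increasing on $[0,\infty)$, $\cosh T \le \cosh\delta$ for $T \in [0,\delta]$, so
\[
K_0(x) \ge \int_0^\delta e^{-x\cosh T}\,dT \ge \delta\, e^{-x\cosh\delta}.
\]
(One could alternatively quote the asymptotic \eqref{eq:K0inf} here, but this crude bound suffices.) Combining the two displays, for all $x > 0$,
\[
\frac{|K_0(x;a)|}{K_0(x)} \le \frac{1}{\delta\, x\, \sinh b}\, e^{-x(\cosh b - \cosh\delta)},
\]
and since $\cosh b - \cosh\delta > 0$ the right-hand side tends to $0$ as $x \to \infty$, which is \eqref{eq:asymp}.

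There is no genuine obstacle; the only point requiring a little care is that the exponential decay rate $e^{-x\cosh b}$ in the tail bound must strictly dominate the exponential behavior permitted by the lower bound for $K_0(x)$, which is exactly why one localizes the lower bound to an interval $[0,\delta]$ with $\delta < b$ (any such $\delta$ works) rather than using an estimate valid on all of $[0,b]$ or $[0,\infty)$, where the competing rate $\cosh b$ would no longer win.
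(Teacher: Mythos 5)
Your proof is correct, and it reaches the same decisive estimate as the paper --- the tail integral decays like $e^{-x\cosh|a|}$, which strictly beats the rate $e^{-x}$ of $K_0(x)$ --- but by a more elementary route. The paper substitutes $u=\sqrt{x}\sinh(T/2)$ to turn the tail into a Gaussian tail integral, deduces the explicit bound $|K_0(x;a)|< e^{-x(1+2\sinh^2(a/2))}/(x\sinh(a/2))$ (note $1+2\sinh^2(a/2)=\cosh a$, so this is the same exponential rate as your tangent-line bound $e^{-x\cosh b}/(x\sinh b)$), and then divides by the known asymptotic \eqref{eq:K0inf} for the denominator. You instead get the numerator bound from convexity of $\cosh$ and replace the appeal to \eqref{eq:K0inf} by the crude lower bound $K_0(x)\ge\delta e^{-x\cosh\delta}$ on a short interval $[0,\delta]$ with $\delta<|a|$; your closing remark correctly identifies why the localization to $\delta<|a|$ is the one point that matters. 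What the paper's version buys is the specific inequality \eqref{eq:K0bound}, which is cited again in the proof of Proposition \ref{prop:compare}; your bound is equally explicit and would serve the same purpose there. A minor plus on your side: by working with $|K_0(x;a)|$ and $b=|a|$ you cover $a<0$ directly, whereas the paper silently reduces to $a>0$ via the odd symmetry in $a$.
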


\begin{proof}
%  Notice that $K_0(x; a) + K_0(x; -a) = 2K_0(x)$ for any $a \in \Rb$.
  The claim is trivial for $a = 0$ and we only need to prove it for $a > 0$.
  %In that case, $b:= \sinh^{-1}(a/2) > 0$ and a change of variable $u = \sqrt{x} \sinh(T/2), du = \sqrt{x} \sinh(T/2) dT/2 = \sqrt{x} \sqrt{u^2 + 1} dT/2$ gives us
  In that case, $b:= \sinh(a/2) > 0$ and a change of variable $u = \sqrt{x} \sinh(T/2), du = \sqrt{x} \cosh(T/2)\,dT/2 =\sqrt{u^2 + x}\, dT/2$ gives us
  \begin{align*}
\sqrt{x} e^x    K_0(x; a)
    &%= \sqrt{x} \int_a^\infty e^{-x \sinh(T/2)^2}dT
    = \sqrt{x} \int_a^\infty e^{-2x \sinh(T/2)^2}dT
      = 2      \int^\infty_{\sqrt{x} b} e^{-2u^2} \frac{du}{\sqrt{u^2/x + 1}}
      < 2      \int^\infty_{\sqrt{x} b} e^{-2u^2} du.
  \end{align*}
  In particular, we have the bound
  \begin{equation}
    \label{eq:K0bound}
    % |K_0(x; a)| < \frac{e^{-x(1+b^2)}}{x b} .
    |K_0(x; a)| < \frac{e^{-x(1+2b^2)}}{x b}.
  \end{equation}
  Taking the limit $x \to \infty$ and applying \eqref{eq:K0inf} finishes the proof.
\end{proof}

\section{Mock Maass Forms}
In this section, we will use the suitable integrals of Siegel theta kernels to construct higher depth analogues of the mock Maass forms constructed by Zwegers.
In particular, we will express Maass' and Zwegers' constructions as special examples (see Section \ref{subsec:mock1}).

\subsection{Mock Maass Theta Functions I}
\label{subsec:mock1}
%For an even, integral, anisotropic \footnote{That means $L$ intersects the two axis in $V_0(\Rb) = \Rb^2$ only at the origin.}
%%%When $L$ is isotropic, this is a finite group.
Let $L = \iota(L_{\af, M})\subset V_0(\Rb)$
be the  even, integral, anisotropic lattice as in \eqref{eq:Laf}, and  $\Gamma_L \subset \SO(L)$ the subgroup
of the discriminant kernel consisting of  totally positive units in $\Oc_F^\times$ congruent to 1 modulo $M\sqrt{D}$. 
Then $\Gamma_L$ is isomorphic to $\Zb$.
%
%Notice that $\varep \in \Gamma_L$ if and only if $\varep' \in \Gamma_L$.
%Also, $\Nm(\varep) = 1$ for all $\varep \in \Gamma_L$. 
Let 
%$\Gamma'_L \subset \Gamma_L$ be subgroup of totally positive element and
$\varep_L > 1$ be its generator.
Given $t_2 >  t_1 > 0$ and denote $c_i := c(t_i) \in \Dc$, we consider
\begin{equation}
  \label{eq:int-c1-c2}
  \hv^{c_1, c_2}_{L + h} (\tau)
  := \int^{t_2}_{t_1} \theta_{L+h}(\tau, t) \frac{dt}{t}.
\end{equation}
This is a real-analytic modular function in $\tau = u + iv \in \Hb$.
%%Suppose that $L$ is anisotropic.
When $t_2/t_1 \in \Gamma_L$, we can evaluate \eqref{eq:int-c1-c2} by unfolding.
%For $t_2 = \varep_L, t_1 = 1$, 
%Notice $B(\lambda \varep,  Z^\pm_t ) = B(\lambda,  Z^\pm_{\varep' t} )$ for any totally positive $\varep \in \Oc_F^\times$.
%and
For $h \not\in L$, we can unfold the integral to obtain
\begin{align*}
  \hv^{c_1, c_2}_{L+h}(\tau)
  &=\frac{\log(t_2/t_1)}{\log \varep_L}
    \int^{\varep_L}_1 \theta_{L+h}(\tau, t) \frac{dt}{t}\\
%   &= 
% \sum_{\lambda \in \Gamma'_L \backslash  L + h}
% \int^\infty_0
%  \phi_\tau(B(\lambda, Z^+_t), B(\lambda, Z^-_t)) \frac{dt}{t}\\
%= 
% \frac{\sqrt{v}}{\sqrt{AM}}
&= \sqrt{v}
\frac{\log(t_2/t_1)}{\log \varep_L}
\sum_{\lambda \in \Gamma_L \backslash L + h}
\ebf\lp
Q_0(\lambda)u
\rp
\int^\infty_{-\infty}
\ebf\lp
\frac{
(\lambda e^{-\nu})^2 + (\lambda' e^\nu)^2 
}{2}iv
\rp
d\nu\\
&= 
  2\sqrt{v}
  \frac{\log(t_2/t_1)}{\log \varep_L}
\sum_{\lambda \in \Gamma_L \backslash L + h}
\ebf\lp
Q_0(\lambda)u
\rp
\int^\infty_{0}
e^{-2\pi v |Q_0(\lambda)|
\cosh(T)}
dT.
\end{align*}
Using \eqref{eq:K0}, we can evaluate
\begin{equation}
  \label{eq:hv}
  \hv^{c_1, c_2}_{L+h}(\tau) 
= 
{2\sqrt{v}}
\frac{\log(t_2/t_1)}{\log \varep_L}
\sum_{\lambda \in \Gamma_L \backslash L + h}
\ebf\lp Q_0(\lambda) u \rp
K_0(2\pi v |Q_0(\lambda)|).
\end{equation}
%when $h \not\in L$.
When $h \in L$, we have the additional constant term
$$
{\sqrt{v}}
\frac{\log(t_2/t_1)}{\log \varep_L}
\int^{\varep_L}_1 \frac{dt}{t}
= {\sqrt{v}}
\log(t_2/t_1).
$$
This recovers the construction of Maass cusp forms by Maass \cite{Ma49}.
For $c_1 = c(1)$ and $c_2 = c(\varep_L)$, we denote
\begin{equation}
  \label{eq:vartheta}
  \vartheta_{L+h}(\tau) :=   \hv^{c_1, c_2}_{L+h}(\tau).
\end{equation}

More generally, we can evaluate the Fourier expansion of $\hv^{c_1, c_2}_{L+h}$ without unfolding as
\begin{align*}
  \hv^{c_1, c_2}_{L + h} (\tau)
  &=
\sum_{\lambda \in  L + h}
\int^{t_2}_{t_1}
    \phi_\tau(\lambda^+_t, \lambda^-_t) \frac{dt}{t}
=  {\sqrt{v}}
\sum_{\lambda \in L + h}
\ebf\lp
Q(\lambda)u\rp
\int^{t_2}_{t_1}
\ebf\lp
\lp  (\lambda^+_t)^2
+ (\lambda^-_t)^2 \rp
  iv
\rp
    \frac{dt}{t}
  \\
&=  {\sqrt{v}}
\sum_{\lambda \in L + h}
\ebf\lp
Q_0(\lambda)\tau\rp
\int^{t_2}_{t_1}
e^{-4 \pi v (\lambda^-_t)^2 }
  \frac{dt}{t}
  =  {\sqrt{v}}
\sum_{\lambda \in L + h}
\ebf\lp
Q_0(\lambda)\tau\rp
\int^{t_2}_{t_1}
e^{-\pi v B_0(\lambda, Z^-_t)^2 }
\frac{dt}{t}.
\end{align*}
This is  Zwegers' function $\hat\Phi^{c_1, c_2}_{h, 0}$ in Definition 2.3 of \cite{Zw12}, and we have shown that it possesses an integral representation given in \eqref{eq:int-c1-c2}.
For general $a, b \in L^*$, we can express $\hat\Phi^{c_1, c_2}_{a, b}$ as a linear combination of $\hv_{\tilde L + \tilde h}$'s for a suitable  sublattice $\tilde L \subset L$. See  \ref{ex:cohen} for an example.
%%\todo{sometimes we called the dual lattice $L^*$, sometimes $L^\ast$}

Now define  for $c_i = c(t_i)$
\begin{equation}
  \label{eq:vt}
  \begin{split}
    \hv^{c_1, c_2, +}_{L+h}(\tau)
    &:=
      \sqrt{v}
      \sum_{\lambda \in L+h}
      a_{\lambda}(t_1, t_2)
%      \sum_{\lambda \in L + h} \frac{1}{2} \lp 1 - \sgn(\lambda_{t_1}^{-\sgn(Q_0(\lambda))} \lambda_{t_2}^{-\sgn(Q_0(\lambda))}) \rp
      \ebf(Q_0(\lambda) u) K_0(2\pi |Q_0(\lambda)| v),    \\
    \varphi^{c_0}_{L+h}(\tau)
    &:=  \sqrt{v} \sum_{\lambda \in L + h} \beta_{t_0}(\lambda \sqrt{v}) \ebf(Q_0(\lambda)u)
  \end{split}
\end{equation}
with the coefficients $a_\lambda$ defined by
\iffalse
\begin{equation}
  \label{eq:am}
  a_{\lambda}(t_1, t_2) :=
  \begin{cases}
    \frac{1}{2} \lp 1 - \sgn(\lambda_{t_1}^{-\sgn(Q_0(\lambda))} \lambda_{t_2}^{-\sgn(Q_0(\lambda))}) \rp & \text{ if } \lambda \neq 0,\\
    \log (t_2/t_1) & \text{ if } \lambda = 0,
  \end{cases}
\end{equation}
\fi
\begin{equation}
	\label{eq:am}
	a_{\lambda}(t_1, t_2) :=
	\begin{cases}
		\frac{1}{2} \lp 1 - \sgn(\lambda_{t_1}^{+} \lambda_{t_2}^{+}) \rp & \text{ if } Q_0(\lambda)<0,\\ %\text{ if } \lambda \neq 0\text{ and }Q_0(\lambda)<0,\\
		\frac{1}{2} \lp 1 - \sgn(\lambda_{t_1}^{-} \lambda_{t_2}^{-}) \rp & \text{ if } Q_0(\lambda)>0,\\ %\text{ if } \lambda \neq 0\text{ and }Q_0(\lambda)>0,\\
		\log (t_2/t_1) & \text{ if } \lambda = 0,
	\end{cases}  
      \end{equation}  
and the function $\beta_t$ defined by
\begin{equation}
  \label{eq:beta}
  \beta_{t_0}(w) :=
  \begin{cases}
e^{-2\pi Q_0(w)}    \int^\infty_{t_0} e^{-4\pi (w_t^-)^2} \frac{dt}{t} &\text{ if } w_{t_0}^- w_{t_0}^+ > 0,\\
    - e^{-2\pi Q_0(w)}\int_{0}^{t_0} e^{-4\pi (w_t^-)^2} \frac{dt}{t}&\text{ if } w_{t_0}^- w_{t_0}^+ < 0,\\
0 &\text{ if } w_{t_0}^- w_{t_0}^+= 0,   
  \end{cases}
\end{equation}
a function on $V_0(\Rb) \backslash \{0\}$.

From the definition of $\beta_t$ in \eqref{eq:beta}, one can check that
%This is a consequence of
\begin{equation}
  \label{eq:mock-maass-check}
  \begin{split}
e^{-2\pi Q_0(w) }\int^{t_2}_{t_1}
e^{-4 \pi (w^-_t)^2 }
  \frac{dt}{t}
    &  =
%      \frac{1}{2} \lp 1 - \sgn(w_{t_1}^{-\sgn(Q_0(w))} w_{t_2}^{-\sgn(Q_0(w))}) \rp
      a_\lambda(t_1, t_2) K_0(2\pi |Q_0(w)|)
 +      \beta_{t_1}(w)  - \beta_{t_2}(w)
% e^{-2\pi Q_0(\lambda) v}\int^{t_2}_{t_1}
% e^{-4 \pi v (\lambda^-_t)^2 }
%   \frac{dt}{t}
%   = \beta_{t_1}(\lambda v^{1/2})  - \beta_{t_2}(\lambda v^{1/2})
%   + \frac{1}{2} \lp 1 - \sgn(\lambda_{t_1}^{-\sgn(Q_0(\lambda))} \lambda_{t_2}^{-\sgn(Q_0(\lambda))})) \rp. 
  \end{split}
\end{equation}
for $w \in V_0(\Rb)$.
Using this relation,
%In \cite{Zw12},
Zwegers decomposed $\hat\Phi_{h, 0}^{c_1, c_2} = \hat\vartheta^{c_1, c_2}_{L+h}$ into a harmonic and a non-harmonic part in the following way.
\begin{thm}
  \label{thm:Zw12}[Theorem 2.6 in \cite{Zw12}]
  In the notations above, we have
  \begin{equation}
      \label{eq:har-decomp}
  \hv^{c_1, c_2}_{L + h}(\tau) = \hv^{c_1, c_2,+}_{L+h}(\tau) + \varphi^{c_1}_{L+h}(\tau) - \varphi^{c_2}_{L+h}(\tau) .
  \end{equation}
\end{thm}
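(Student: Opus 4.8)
The plan is to prove the decomposition \eqref{eq:har-decomp} by a direct term-by-term comparison of Fourier expansions, using the pointwise identity \eqref{eq:mock-maass-check} as the engine. First I would start from the Fourier expansion of $\hv^{c_1, c_2}_{L+h}(\tau)$ derived just above the theorem statement, namely
\begin{equation*}
  \hv^{c_1, c_2}_{L + h} (\tau) = \sqrt{v} \sum_{\lambda \in L + h} \ebf(Q_0(\lambda)u)\, e^{-2\pi Q_0(\lambda) v}\int^{t_2}_{t_1} e^{-4 \pi v (\lambda^-_t)^2 } \frac{dt}{t},
\end{equation*}
where I have written $\ebf(Q_0(\lambda)\tau) = \ebf(Q_0(\lambda)u)\,e^{-2\pi Q_0(\lambda)v}$. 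The key observation is that the function $\beta_{t_0}$ in \eqref{eq:beta} is not homogeneous of degree $0$ but is instead built to absorb the factor $v$: evaluating \eqref{eq:mock-maass-check} at the rescaled vector $w = \lambda\sqrt v$, and using that $Q_0(\lambda\sqrt v) = vQ_0(\lambda)$, that $(\lambda\sqrt v)^\pm_t = \sqrt v\,\lambda^\pm_t$, and that the sign $a_\lambda(t_1,t_2)$ depends only on the signs of $\lambda^\pm_{t_i}$ (hence is invariant under scaling $\lambda$ by $\sqrt v>0$), yields exactly
\begin{equation*}
  e^{-2\pi Q_0(\lambda)v}\int^{t_2}_{t_1} e^{-4\pi v(\lambda^-_t)^2}\frac{dt}{t} = a_\lambda(t_1,t_2)\,K_0(2\pi|Q_0(\lambda)|v) + \beta_{t_1}(\lambda\sqrt v) - \beta_{t_2}(\lambda\sqrt v).
\end{equation*}

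Next I would substitute this into the Fourier expansion and split the single sum over $\lambda \in L+h$ into three sums according to the three summands on the right. The first produces $\sqrt v\sum_\lambda a_\lambda(t_1,t_2)\,\ebf(Q_0(\lambda)u)\,K_0(2\pi|Q_0(\lambda)|v)$, which is precisely $\hv^{c_1, c_2,+}_{L+h}(\tau)$ by the first line of \eqref{eq:vt}. The second and third produce $\sqrt v\sum_\lambda \beta_{t_1}(\lambda\sqrt v)\,\ebf(Q_0(\lambda)u)$ and its $t_2$-analogue, which are exactly $\varphi^{c_1}_{L+h}(\tau)$ and $\varphi^{c_2}_{L+h}(\tau)$ by the second line of \eqref{eq:vt} (with $c_i = c(t_i)$). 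This gives \eqref{eq:har-decomp} formally; one should also note the $\lambda = 0$ term is handled consistently, since there $a_0(t_1,t_2) = \log(t_2/t_1)$, $K_0(0) = 0$ by \eqref{eq:K00}, and $\beta_{t_0}(0) = 0$ by the third case of \eqref{eq:beta}, so the identity \eqref{eq:mock-maass-check} still holds and contributes the expected constant term $\sqrt v\log(t_2/t_1)$ when $h\in L$.

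The main obstacle is justifying the rearrangement: one must check that each of the three resulting series converges absolutely (or at least that the interchange of summation is legitimate), since $\hv$ itself is only conditionally an indefinite theta-type object. For the $K_0$-series this follows from the exponential decay \eqref{eq:K0inf} together with anisotropy of $L$, which bounds $|Q_0(\lambda)|$ away from $0$ on $L+h\setminus\{0\}$ and forces $|Q_0(\lambda)|\to\infty$; Lemma \ref{lemma:asymp} and the bound \eqref{eq:K0bound} give the needed control for the tails, in particular for the $\beta$-terms where one writes $\beta_{t_0}(\lambda\sqrt v)$ in terms of incomplete $K$-Bessel integrals $K_0(2\pi|Q_0(\lambda)|v; a)$ with $a$ depending on $\lambda$ and $t_0$. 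Once absolute convergence of the three pieces is in hand, Fubini/rearrangement is automatic and the proof concludes. I would also remark that this argument is simply a repackaging of Zwegers' original proof of Theorem 2.6 in \cite{Zw12}, the only new input being the integral representation \eqref{eq:int-c1-c2} established above, so the write-up can afford to be brief and cite \cite{Zw12} for the convergence details.
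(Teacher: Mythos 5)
Your argument is essentially the paper's: the statement is exactly Zwegers' Theorem 2.6, and the paper likewise obtains it by substituting $w=\lambda\sqrt{v}$ into the pointwise identity \eqref{eq:mock-maass-check} and summing term by term over the Fourier expansion of $\hv^{c_1,c_2}_{L+h}$, with the convergence of the three resulting series controlled as you describe. One small correction: your sentence on $\lambda=0$ is internally inconsistent, since with the conventions $K_0(0)=0$ from \eqref{eq:K00} and $\beta_{t_0}(0)=0$ from \eqref{eq:beta} the right-hand side of \eqref{eq:mock-maass-check} vanishes at $w=0$ while the left-hand side equals $\log(t_2/t_1)$, so the identity does \emph{not} hold there and cannot ``contribute'' the constant term; instead you should exclude $\lambda=0$ from the termwise identity and match the constant term $\sqrt{v}\log(t_2/t_1)$ of $\hv^{c_1,c_2}_{L+h}$ directly against the $\lambda=0$ term of $\hv^{c_1,c_2,+}_{L+h}$, reading the $\lambda=0$ case of \eqref{eq:am} as contributing $\sqrt{v}\,a_0(t_1,t_2)$ without the $K_0$ factor.
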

\begin{rmk}
  \label{rmk:alam}
  It is easy to check that the coefficients $a_\lambda$ have the following simpler form
  \begin{equation}
    \label{eq:alam2}
    a_\lambda(t_1, t_2)=
    \begin{cases}
      1 & \text{if } \lambda \neq 0 \text{ and } t_1^2 < \left|\frac{\lambda_1}{\lambda_2}\right| < t_2^2,\\
      \frac{1}{2} & \text{if } \lambda \neq 0 \text{ and } t_i^2 = \left|\frac{\lambda_1}{\lambda_2}\right| \text{ for } i \in \{1, 2\},\\
      \log\frac{t_2}{t_1} & \text{if } \lambda = 0,\\
      0& \text{otherwise.}
    \end{cases}
  \end{equation}
\end{rmk}

The function $\hv_{L+h}^{c_1, c_2}$ also satisfies nice properties under differential operators.
Suppose $w = (w_1, w_2)$ with $Q_0(w) = w_1w_2 \neq 0$,
% and $w_{t_0}^- w_{t_0}^+ \neq 0$,
then
$$
\sgn(w_{t}^- w_{t}^+)
= \sgn((w_2t)^2 - (w_1t^{-1})^2)
= \sgn(\log(t^2 |w_2/w_1|))
$$
and we can rewrite
\begin{equation}
  \label{eq:beta2}
  \begin{split}
      \beta_{t_0}(w)
&  =
\sgn((w_2t_0)^2 - (w_1t_0^{-1})^2)
  \int^\infty_{t_0|w_2/w_1|^{1/2}} e^{-\pi |Q_0(w)| (t^2+t^{-2})} \frac{dt}{t}\\
&  = \frac12 K_0(2\pi |Q_0(w)|, \log (t_0^2 |w_2/w_1|)).
  \end{split}
\end{equation}
%Using Lemma \ref{lemma:diffop}, it is easy to check that $\beta_{t_0}$ satisfies
%\begin{equation}
%  \label{eq:beta-diff}
%(  v  \partial_v^2
 % + \partial_v
%- 4 \pi^2 Q_0(\lambda)^2 v   )\beta_{t_0}(\lambda v^{1/2})
%  = -2 \pi  \lambda_{t_0}^+ \lambda_{t_0}^- e^{-2\pi v((\lambda_{t_0}^+)^2 + (\lambda_{t_0}^-)^2)}
%\end{equation}
%for any $\lambda \in V_0(\Rb) \backslash (c(t_0) \cup c(t_0)^\perp)$.
A straightforward calculation gives the identity
\begin{align*}
	(v\partial_v^2
	+ \partial_v
	- x^2 v   )\lp K_{0}(xv;a)\rp= x\sinh(a)\, e^{-xv\cosh(a)}.
\end{align*}
Using \eqref{eq:beta2} it is then easy to check that $\beta_{t_0}$ satisfies
\begin{equation}
	\label{eq:beta-diff}
	(  v  \partial_v^2
	+ \partial_v
	- 4 \pi^2 Q_0(\lambda)^2 v   )\beta_{t_0}(\lambda \sqrt{v})
	= 2 \pi  \lambda_{t_0}^+ \lambda_{t_0}^- e^{-2\pi v((\lambda_{t_0}^+)^2 + (\lambda_{t_0}^-)^2)}
\end{equation}
for any $\lambda \in V_0(\Rb) \backslash (c(t_0) \cup c(t_0)^\perp)$. 
This equation proves the following theorem of Zwegers.
\begin{thm}
  \label{thm:Zw-diff}[Theorem 2.7 of \cite{Zw12}]
  In the notation above, we have
\footnote{  Note that the unary theta series as defined here and the one defined by Zwegers differ by a factor of 4, i.e.\ $4\theta_{L+h}^{(1, 1)}(\tau, t)$ equals $\vartheta_{h,0}^c(\tau)$.}
  \begin{equation}
    \label{eq:diff-eq1}
    \begin{split}
      \tD  \varphi^{c_i}_{L + h}(\tau)
      &= 2\pi\theta_{L+h}^{(1, 1)}(\tau, t_i),\\
      \tD  \hv^{c_1, c_2}_{L + h}(\tau)
      &=2\pi(\theta_{L+h}^{(1, 1)}(\tau, t_1)-\theta_{L+h}^{(1, 1)}(\tau, t_2)).
    \end{split}
  \end{equation}
\end{thm}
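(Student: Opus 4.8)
The plan is to prove the first identity of \eqref{eq:diff-eq1} termwise, and then obtain the second one by applying $\tD$ to the harmonic decomposition \eqref{eq:har-decomp}.

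First I would isolate a single Fourier term of $\varphi^{c_i}_{L+h}(\tau) = \sum_{\lambda\in L+h} g_\lambda(\tau)$, where $g_\lambda(\tau) := \sqrt v\,\beta_{t_i}(\lambda\sqrt v)\,\ebf(Q_0(\lambda)u)$, and compute $\tD g_\lambda$ directly. Since $\partial_u^2$ only hits the exponential, $v^2\partial_u^2 g_\lambda = -4\pi^2 Q_0(\lambda)^2 v^2 g_\lambda$; writing $B_\lambda(v) := \beta_{t_i}(\lambda\sqrt v)$ one has $v^2\partial_v^2\big(v^{1/2}B_\lambda\big) = v^{3/2}\big(vB_\lambda'' + B_\lambda'\big) - \tfrac14 v^{1/2}B_\lambda$. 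Now substitute \eqref{eq:beta-diff}, which rewrites $vB_\lambda'' + B_\lambda'$ as $4\pi^2 Q_0(\lambda)^2 v B_\lambda$ plus the source term $2\pi\lambda_{t_i}^+\lambda_{t_i}^-\, e^{-2\pi v((\lambda_{t_i}^+)^2 + (\lambda_{t_i}^-)^2)}$. The $Q_0(\lambda)^2 v B_\lambda$ piece cancels against $v^2\partial_u^2 g_\lambda$, the $-\tfrac14 v^{1/2}B_\lambda$ piece cancels against the $+\tfrac14$ in $\tD$, and what survives is precisely $\tD g_\lambda = 2\pi\, v^{3/2}\lambda_{t_i}^+\lambda_{t_i}^-\, e^{-2\pi v((\lambda_{t_i}^+)^2 + (\lambda_{t_i}^-)^2)}\ebf(Q_0(\lambda)u)$. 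Comparing with \eqref{eq:theta_explicit} for $p(x,y)=xy$, $n=2$, $d^-=1$ — whose $\lambda$-term is $v^{3/2}\lambda_{t_i}^+\lambda_{t_i}^-\ebf\big((\lambda_{t_i}^+)^2\tau-(\lambda_{t_i}^-)^2\overline\tau\big)$, the exponent of which collapses to $\ebf(Q_0(\lambda)u)e^{-2\pi v((\lambda_{t_i}^+)^2+(\lambda_{t_i}^-)^2)}$ by \eqref{eq:lam-id} — one sees that $\tD g_\lambda$ is exactly $2\pi$ times the $\lambda$-term of $\theta^{(1,1)}_{L+h}(\tau,t_i)$. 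Summing over $\lambda$ yields the first line of \eqref{eq:diff-eq1}. The terms with $\lambda = 0$ or $\lambda\in c(t_i)\cup c(t_i)^\perp$, on which \eqref{eq:beta-diff} is not stated, contribute $0$ to both sides since both $\beta_{t_i}$ (by the third case of \eqref{eq:beta}) and the polynomial $xy$ vanish there.

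For the second identity I would apply $\tD$ to \eqref{eq:har-decomp}. It then suffices to observe that $\hv^{c_1,c_2,+}_{L+h}$ is annihilated by $\tD$: each of its summands is a constant times $v^{1/2}K_0(2\pi|Q_0(\lambda)|v)\ebf(Q_0(\lambda)u)$, and running the computation of the previous paragraph with the Bessel equation $xK_0'' + K_0' - xK_0 = 0$ in place of \eqref{eq:beta-diff} — so that there is no source term — gives $\tD$ of this summand equal to $0$ (the $\lambda = 0$ contribution vanishes by the convention $K_0(0)=0$ of \eqref{eq:K00}). Hence $\tD\hv^{c_1,c_2}_{L+h} = \tD\varphi^{c_1}_{L+h} - \tD\varphi^{c_2}_{L+h} = 2\pi\big(\theta^{(1,1)}_{L+h}(\tau,t_1) - \theta^{(1,1)}_{L+h}(\tau,t_2)\big)$.

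The cancellations above are purely formal once \eqref{eq:beta-diff} and the Bessel ODE are available, so the substantive part is analytic: one must justify differentiating the defining series of $\varphi^{c_i}_{L+h}$ (and of $\hv^{c_1,c_2,+}_{L+h}$) term by term, i.e.\ local uniform convergence on $\Hb$ of the differentiated series, and check that the non-smoothness of $\beta_{t_i}$ across $c(t_i)\cup c(t_i)^\perp$ does not produce a distributional term. For the convergence I would use the exponential decay of $\beta_{t_i}(\lambda\sqrt v)$ in $|Q_0(\lambda)|v$ furnished by \eqref{eq:beta2} together with \eqref{eq:K0bound}; for the regularity issue, the vanishing recorded at the end of the second paragraph makes it disappear after a short argument — alternatively one first treats those $t_i$ for which $c(t_i)$ contains no vector of $L+h$ and then passes to general $t_i$ by continuity. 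This is the one place where genuine care is required.
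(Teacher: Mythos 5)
Your proposal is correct and follows essentially the same route as the paper: the paper establishes the ODE \eqref{eq:beta-diff} and then declares that it proves the theorem, which is exactly your termwise computation of $\tD$ on the Fourier expansion of $\varphi^{c_i}_{L+h}$, with the second identity obtained from \eqref{eq:har-decomp} and the harmonicity of the $K_0$-terms. Your extra remarks on termwise differentiation and on the wall vectors $\lambda\in c(t_i)\cup c(t_i)^\perp$ (for which each summand is identically zero in $v$, so no distributional contribution can arise) fill in details the paper leaves implicit.
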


%%%\todo[color=green,inline]{YL: check constants. CR: I changed the constant to $2\pi$.}
%$\tD \hat\Phi^{c_1, c_2}_{h, 0} =\frac{\pi}{2}(\theta_{L+h}^{(1, 1)}(\tau, t_1)-\theta_{L+h}^{(1, 1)}(\tau, t_2))$.
\begin{rmk}
When $t_0^2 \in F^1$, the function $\theta_{L+h}^{(1, 1)}(\tau, t_0) $ is in $S_{3/2} \otimes v^{3/2}\overline{S_{3/2}}$ by \eqref{eq:theta-split}.
\end{rmk}
%where $\hv_h^{c}$ is $t\partial_t\theta_{L+h}$ with regard to Zwegers' parametrization of $\Dc$. 

%  Theorem 2.6 in \cite{Zw12} showed that
% \begin{equation}
% \end{equation}
%%%We give a quick proof here.
%%%It suffices to show that
% \todo[color=red]{YL: what happens for $t = 0$?? prove equality above.}
%%%\todo[inline]{give here a quick proof of above eq.}
\begin{exmp}\label{ex:cohen}
In Remark 2.2 and Section 6 of \cite{Zw12} it is re-proved that Cohen's function $\varphi_0$ is a Maass form of weight 0 by relating it to $\hat\Phi_{a, b}^{c_1, c_2}$ for suitable $a, b, c_1, c_2$.

Zwegers considers the quadratic space $(\Zb^2,Q')$ with $Q'((m,n))=\frac12 (3m^2-2n^2)$. Further, the indefinite theta series depends on the characteristics $a=\lp\begin{smallmatrix}
	1/6\\0
\end{smallmatrix}\rp$ and $b=\lp\begin{smallmatrix}
1/6\\1/4
\end{smallmatrix}\rp$.

Under the identification $(m,n)\mapsto 3m+\sqrt{6}n$ we have $(\Zb^2,Q')\simeq (\af,Q)$ with $\af=3\Zb+\sqrt{6}\Zb$ an integral ideal in the ring of integers $\Oc$ of $F = \Qb(\sqrt{6})$ and $Q=\Nm/6$. The sublattice $2\Zb^2$ is then isometric to $L:= L_{\af, 2} = 2\af = 6\Zb+2\sqrt{6}\Zb$.
Its dual lattice is $\af\df^{-1} = \frac{1}{2}\Zb + \frac{\sqrt{6}}{4} \Zb$, 
where $\df = 2\sqrt3 \Oc$. 
% (see Section 2.1 \cite{CL20} for the definition of $\df$).\todo[inline]{we didn't define $\df$, maybe it's better to define it properly in Section 2}
We split
\begin{align*}
  a+\Zb^2 &= \lp a + 2\Zb^2 \rp \sqcup
            \lp a + \binom{1}{0} + 2\Zb^2 \rp \sqcup \lp a +  \binom{0}{1} + 2\Zb^2 \rp \sqcup \lp a +  \binom{1}{1} + 2\Zb^2 \rp \\
           & \simeq \lp \frac12+L \rp\sqcup \lp \frac72+L \rp \sqcup \lp \frac12+\sqrt{6}+L \rp\sqcup \lp \frac72+\sqrt{6}+L \rp.
\end{align*}
We thus consider the theta series associated to $L=L_{\af, 2}$.
Let $t_1=1$, $t_2= \varepsilon := 5+2\sqrt{6}$.
For
\begin{equation}
  \label{eq:hs}
h \in \{1/2, 7/2, 1/2 + \sqrt{6}, 7/2 + \sqrt{6}\}  
\end{equation}
 and $i = 1, 2$, the theta function $\theta^{(1, 1)}_{L+h}(\tau, t_i)$ vanishes identically since  the automorphism $\lambda \mapsto \lambda'$ on $L+h$ creates a minus sign.
That means $\hv^{c_1, c_2}_{L+h} = \hv^{c_1, c_2, +}_{L+h}$ is a Maass form by Theorem \ref{thm:Zw-diff}.
Furthermore, we have the identity
\begin{equation}
  \label{eq:Maass-id}
  \hv^{c_1, c_2}_{L+h}
  =  \hv^{c_1, c_2, +}_{L+h}
=  \frac{\vartheta_{L+h}}{4} =   \frac{\vartheta_{L+h+3+\sqrt{6}}}{4} 
\end{equation}
for all $h$ in \eqref{eq:hs}.

%To write down the Fourier expansion of $ \hv^{c_1, c_2, +}_{L+h}$
Notice that if $\lambda \in L + h$ for $h$ in \eqref{eq:hs}, then $\bfrak = (2\lambda)$ is an integral ideal of $\Oc$ with norm congruent to 1 modulo 24.
Conversely given such an ideal $\bfrak$, we can find a unique generator $\beta$ such that $1 \le \beta/\beta' < \varepsilon^2$ and $\beta/2 \in L + h$ for an $h$ in \eqref{eq:hs}.
In case $\beta/\beta' = 1$, i.e.\ $\beta \in \Qb$, then $\lambda = -\varep \beta/2$ also gives non-zero value for $a_\lambda(1, \varep)$. 
We list these ideals with norm less than 100 together with the generator and value $a_\lambda(1, \varep)$ below.
\begin{table}[h]
\caption{Values of $a_\lambda(1, \varep)$}
\label{table:ideals}
$\begin{array}{c|c|c|c|c|c}
&    & \frac12 &  \frac72 + \sqrt{6} &  \frac72 &  \frac12+\sqrt{6} \\ 
  \text{Norm} & \beta = 2 \lambda & & & \\\hline
   1 &   1, -\varep & \frac12 & \frac12 & & \\ \hline
   \multirow{2}{*}{23} & 1 + 2\sqrt{6} & & & & 1 \\
& -\varep(1 - 2\sqrt{6}) & & &1 & \\ \hline
   \multirow{3}{*}{25} & 7 + 2\sqrt{6} & & 1& &  \\
& -\varep(7 - 2\sqrt{6}) & 1& & & \\ 
& -5, 5\varep & & & \frac12 &\frac12\\       \hline
   \multirow{2}{*}{47} & 7 + 4\sqrt{6} & & & 1&  \\
& -\varep(7 - 4\sqrt{6}) & & & &1 \\ \hline
   {49} & 7,-7\varep & & & \frac12 &\frac12\\   \hline
   \multirow{2}{*}{71} & -(5 + 4\sqrt{6}) & & & 1&  \\
& \varep(5 - 4\sqrt{6}) & & & &1 \\ \hline
   \multirow{2}{*}{73} &   13+4\sqrt{6} & 1 & & & \\
& -\varep(13-4\sqrt{6}) & & 1 & & \\ \hline
   \multirow{4}{*}{95} & 1 + 4\sqrt{6} & 1& & &  \\
& -\varep(1 - 4\sqrt{6}) & &1 & & \\ 
& \varep(11-6\sqrt{6}) & & & 1 &\\   
& -(11-6\sqrt{6}) & & & &1 \\    \hline
   \multirow{2}{*}{97} & -(11 +2\sqrt{6}) & & & & 1 \\
& \varep(11 - 2\sqrt{6}) & & &1 & \\ \hline
 \end{array}$
\end{table}

Also, the map $\lambda \mapsto -\varepsilon \lambda'$ identifies $  \hv^{c_1, c_2,+}_{L+\frac12}(\tau)$ (resp.\  $  \hv^{c_1, c_2,+}_{L+\frac72}(\tau)$)
with $  \hv^{c_1, c_2,+}_{{L}+\frac12+\sqrt{6}}(\tau)$ (resp.\ $  \hv^{c_1, c_2,+}_{{L}+\frac72+\sqrt{6}}(\tau)$). 
Using these, we have
\begin{align*}
\sqrt{24}^{-1}  \hv^{c_1, c_2,+}_{L+\frac12}(24\tau)
  &= \dots + W_{-95}(\tau) + \frac12 W_{1}(\tau) + W_{25}(\tau) + W_{73}(\tau) + \frac12 W_{121}(\tau) + \dots,\\
\sqrt{24}^{-1}  \hv^{c_1, c_2,+}_{L+\frac72}(24\tau)
  &= \dots  +W_{-95}(\tau) + W_{-71}(\tau) + W_{-47}(\tau) + W_{-23}(\tau)\\
&\quad    + \frac12 W_{25}(\tau) + \frac12 W_{49}(\tau) +  W_{97}(\tau) + \dots,
\end{align*}
where $W_n(\tau) := \sqrt{v}\ebf(nu) K_0(2\pi |n|v)$. 
%By comparing the first few Fourier coefficients,
We can now write Cohen's function as
\begin{equation}
  \label{eq:cohen}
  \begin{split}
    \varphi_0(\tau)
    &=
\hv^{c_1, c_2,+}_{L+\frac12}(\tau)
- \hv^{c_1, c_2,+}_{L+\frac72}(\tau)
- \hv^{c_1, c_2,+}_{L+\frac12+\sqrt{6}}(\tau)
+  \hv^{c_1, c_2,+}_{L+\frac72+\sqrt{6}}(\tau)\\
&= \frac12 \lp \vartheta_{L+1/2}(\tau) - \vartheta_{L+7/2}(\tau)\rp.
  \end{split}
\end{equation}
%Zwegers further shows that $\hat\Phi^{c_1, c_2}_{a,b}=\Phi^{c_1, c_2}_{a,b}$ holds. We re-prove in the next section that $\varphi_0$ is indeed a Maass cusp form.
\end{exmp}
%%%We recover this result in the next section.

%%%when we consider the constant polynomial $P\equiv 1$.
%(In Theorem 2.7 of \cite{Zw12} it is shown that $\tD \hat\Phi^{c_1, c_2}_{h, 0} =\frac{\pi}{2}(\hv_h^{c_1}-\hv_h^{c_2})$, where $\hv_h^{c}$ is $t\partial_t\theta_{L+h}$ with regard to Zwegers' parameterization of $\Dc$.)

%Under the operator $\tD$, the function $\hv^{c_1, c_2}_{L+h}$ becomes essentially $\theta_{L+h}(\tau, t_1) - \theta_{L+h}(\tau, t_2)$, which is in $S_{3/2} \otimes \overline{S_{3/2}}$.
%Under the operator $\tD$, the function $\hv^{c_1, c_2}_{L+h}$ becomes essentially $(t\partial_t\theta_{L+h})(\tau, t_1) - (t\partial_t\theta_{L+h})(\tau, t_2)$, which is in $S_{3/2} \otimes \overline{S_{3/2}}$.

\subsection{ Mock Maass Theta Functions II}
\label{subsec:mMt2}
In this section, we will further decompose the function $\hv^{c_1, c_2}_{L+h}$ into the difference of two functions that depend only on $c_1$ and $c_2$ respectively.

For $c_0 = c(t_0)$, consider
\begin{equation}
  \label{eq:tv}
  \begin{split}
\tv^{c_0}_{L+h}(\tau)
&:=  \int^{ t_0 \varepsilon_L}_{ t_0 } \theta_{L+h}(\tau, t) \log (t) \frac{dt}{t}
=    {\sqrt{v}}
\sum_{\lambda \in L + h}
\ebf\lp
  Q_0(\lambda)u\rp
  \tbe_{t_0}(\lambda \sqrt{v}),\\
    \tbe_{t_0}(w)
&:=
e^{-2\pi Q_0(w)}
  \int^{t_0 \varepsilon_L}_{t_0}
e^{-4 \pi  (w^-_t)^2 }
\log( t)  \frac{dt}{t},
  \end{split}
\end{equation}
where $L \subset V_0(\Rb)$ is an even, integral, anisotropic lattice.
Now for $\Lambda = \Gamma_L \cdot \lambda \in \Gamma_L \backslash L + h$,
%with $m := Q_0(\Lambda)$
we define
\begin{equation}
  \label{eq:cLam}
  \tc_{t_0}(\Lambda) :=
  \begin{cases}
    % \frac12 \lp \log |\lambda_0/\lambda_0'| + \delta_{t_0^2 = |\lambda_0/\lambda_0'|}\log \varepsilon_L\rp,& Q_0(\Lambda) \neq 0,~ t_0^2 \le |\lambda_0 /\lambda_0'| < t_0^2 \varepsilon_L^2,\\
\frac{\log (t_0^2 \varep_L)}{2}  + 
\log \varep_L \cdot \Bb_1
\lp \frac{\log (|\lambda/\lambda'| t_0^{-2})}{2\log \varep_L} \rp
& \text{ if } Q_0(\Lambda) \neq 0,\\
%~ t_0^2 \le |\lambda_0 /\lambda_0'| < t_0^2 \varepsilon_L^2,\\
\frac12 \log (\varep_L ) \log(t^2_0 \varepsilon_L)
 & \text{ if }Q_0(\Lambda) = 0.
  \end{cases}
\end{equation}
Here $\Bb_1 : \Rb/\Zb \to \Rb$ is the first periodic Bernoulli polynomial defined by $\Bb_1(x) = x - \frac{\delta_{x \neq 0}}{2}$ for $0 \le x < 1$.
%%and $\Bb_1(0) = 0$. 
%%%where $\Lambda = \Gamma_L \cdot \lambda_0$ with $t_0^2 \le |\lambda_0 /\lambda_0'| < t_0^2 \varepsilon_L^2$.
Note that $Q_0(\Lambda) = 0$ if and only if $\lambda$ is trivial since $L$ is anisotropic.
Then we have the following result.
\begin{prop}
  \label{prop:compare}
  For any non-trivial $\lambda_0 \in L + h$ and $n \in \Zb$, denote $\lambda_n := \lambda_0 \varepsilon_L^n$.
  Let $\Lambda := \Gamma_L \cdot \lambda_0$. 
Then we have
  \begin{equation}
    \label{eq:compare}
    \sum_{n \in \Zb} \tbe_{t_0} (\lambda_n \sqrt{v})
    =
  - \log \varepsilon_L  \sum_{n \in \Zb} \beta_{t_0} (\lambda_n \sqrt{v})
  + \tc_{t_0}(\Lambda) \cdot
  K_0(2\pi |Q_0(\lambda_0)| v).
  \end{equation}
\end{prop}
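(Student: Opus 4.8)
The plan is to compute the left-hand side of \eqref{eq:compare} directly from the integral definition of $\tbe_{t_0}$ in \eqref{eq:tv}, using the substitution $t \mapsto \varepsilon_L^n t$ to collapse the sum over $n$ into a single integral over $(0, \infty)$ with a weight coming from $\log t$. Concretely, since $\lambda_n = \lambda_0\varepsilon_L^n$ and the isometry $\iota$ intertwines multiplication by $\varepsilon_L$ on $L$ with the action $t \mapsto \varepsilon_L t$ on $\Dc$ (so that $(\lambda_n)^-_t = (\lambda_0)^-_{t/\varepsilon_L^n}$ up to the relevant scaling), each summand $\tbe_{t_0}(\lambda_n\sqrt{v})$ becomes $e^{-2\pi Q_0(\lambda_n)}\int_{t_0}^{t_0\varepsilon_L} e^{-4\pi ((\lambda_n)^-_t)^2}\log(t)\frac{dt}{t}$, and after the change of variable $t = \varepsilon_L^{-n} s$ the exponential factor becomes the same Gaussian in $s$ while $\log t = \log s - n\log\varepsilon_L$. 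Summing over $n$ telescopes the domains $\bigsqcup_n [t_0\varepsilon_L^{n}, t_0\varepsilon_L^{n+1}] = (0, \infty)$, and one obtains
\[
\sum_{n\in\Zb}\tbe_{t_0}(\lambda_n\sqrt{v}) = e^{-2\pi Q_0(\lambda_0)}\int_0^\infty e^{-4\pi(\lambda_0)^-_s{}^2}\bigl(\log s - n(s)\log\varepsilon_L\bigr)\frac{ds}{s},
\]
where $n(s) = \lfloor \log_{\varepsilon_L}(s/t_0)\rfloor$ is the index with $s \in [t_0\varepsilon_L^{n(s)}, t_0\varepsilon_L^{n(s)+1})$.

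Next I would recognize the two pieces. The term $-n(s)\log\varepsilon_L = \log\varepsilon_L\bigl(\{\log_{\varepsilon_L}(s/t_0)\} - \log_{\varepsilon_L}(s/t_0)\bigr) = \log\varepsilon_L\cdot\{\log_{\varepsilon_L}(s/t_0)\} - \log(s/t_0)$, so the integrand splits as $\log t_0 + \log\varepsilon_L\cdot\{\log_{\varepsilon_L}(s/t_0)\}$. Now on the one hand, $\beta_{t_0}(\lambda_n\sqrt v)$ by \eqref{eq:beta} (resp.\ \eqref{eq:beta2}) is, after the same substitution, $\pm e^{-2\pi Q_0(\lambda_0)}$ times the Gaussian integral over a half-line $[t_0, \infty)$ or $(0, t_0]$ depending on the sign of $(\lambda_n)^-_{t_0}(\lambda_n)^+_{t_0}$; summing over $n$ one finds that $-\log\varepsilon_L\sum_n\beta_{t_0}(\lambda_n\sqrt v)$ accounts precisely for the contribution of the fractional-part term $\log\varepsilon_L\cdot\{\log_{\varepsilon_L}(s/t_0)\}$ minus its average $\tfrac12\log\varepsilon_L$ — this is where the periodic Bernoulli polynomial $\Bb_1$ enters, via $\{x\} - \tfrac12 = \Bb_1(x)$ for $x\notin\Zb$ and the jump/half-value at integers matching the $\delta_{x\neq 0}/2$ convention (the integer case $s/t_0 \in \varepsilon_L^\Zb$ corresponds exactly to $\lambda_n$ lying on $c(t_0)$ or $c(t_0)^\perp$, where $\beta_{t_0}$ is defined to vanish). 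On the other hand, the remaining constant part of the integrand, namely $\log t_0 + \tfrac12\log\varepsilon_L = \tfrac12\log(t_0^2\varepsilon_L)$, multiplies $e^{-2\pi Q_0(\lambda_0)}\int_0^\infty e^{-4\pi(\lambda_0)^-_s{}^2}\frac{ds}{s}$, and this last integral is exactly the unfolded Bessel integral computed in the derivation of \eqref{eq:hv}, equal to $K_0(2\pi|Q_0(\lambda_0)|v)$ (after restoring the $\sqrt v$ scaling from $w = \lambda_0\sqrt v$). Matching the $\Bb_1$-term against $\log\varepsilon_L\cdot\Bb_1\bigl(\tfrac{\log(|\lambda_0/\lambda_0'|t_0^{-2})}{2\log\varepsilon_L}\bigr)$ requires identifying the fractional part: one checks that $\{\log_{\varepsilon_L}(s_0/t_0)\}$ evaluated at the distinguished point where the Gaussian $e^{-4\pi(\lambda_0)^-_s{}^2}$ peaks — i.e.\ where $(\lambda_0)^-_s = 0$, which is $s_0^2 = |\lambda_0/\lambda_0'|$ under \eqref{eq:lambda-decomp}, \eqref{eq:embedding} — gives the stated argument, after a more careful accounting that the $\Bb_1$-weighted Gaussian integral reduces to $\Bb_1$ at this point times $K_0$, using that $\Bb_1$ is piecewise linear and the Gaussian localizes the relevant integral.

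For the trivial case $\lambda_0 = 0$ (equivalently $Q_0(\Lambda) = 0$, which forces $h \in L$ but is excluded here by hypothesis; still, if one wants the formula one notes) the integrals are elementary: $(\lambda_n)^-_t \equiv 0$, so $\tbe_{t_0}(0) = \int_{t_0}^{t_0\varepsilon_L}\log t\,\frac{dt}{t} = \tfrac12\bigl((\log t_0 + \log\varepsilon_L)^2 - (\log t_0)^2\bigr) = \log\varepsilon_L\log t_0 + \tfrac12(\log\varepsilon_L)^2 = \tfrac12\log\varepsilon_L\log(t_0^2\varepsilon_L)$, while $\beta_{t_0}(0) = 0$ and $K_0(0) = 1$... — but since Proposition \ref{prop:compare} is stated only for non-trivial $\lambda_0$, this case is not needed.

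The main obstacle I anticipate is the bookkeeping at the boundary: making the telescoping of $\bigsqcup_n[t_0\varepsilon_L^n, t_0\varepsilon_L^{n+1}]$ rigorous requires justifying the interchange of sum and integral (absolute convergence, which follows from the Gaussian decay in $s$ together with the bound in Lemma \ref{lemma:asymp} / \eqref{eq:K0bound} for the tails), and then correctly tracking the half-integer values of $\Bb_1$ and the vanishing convention \eqref{eq:K00}, \eqref{eq:beta} at the loci $c(t_0)$, $c(t_0)^\perp$ so that the two sides agree *on the nose* rather than up to a lattice of measure-zero exceptions. The other delicate point is extracting $\Bb_1$ evaluated at a single point out of a Gaussian-weighted average of the sawtooth function; this should follow because $\Bb_1$ is affine on each fundamental interval so the weighted average equals $\Bb_1$ at the weighted mean of $\log_{\varepsilon_L}(s/t_0)$, and one shows this weighted mean is $\log_{\varepsilon_L}(s_0/t_0) = \tfrac{\log(|\lambda_0/\lambda_0'|t_0^{-2})}{2\log\varepsilon_L}$ by symmetry of the Gaussian about $s_0$ — but verifying the affine-on-each-piece reduction when the Gaussian straddles a breakpoint of the sawtooth is exactly the computation hidden behind \eqref{eq:mock-maass-check} and \eqref{eq:beta2}, so I would lean on those identities rather than redo the integral from scratch.
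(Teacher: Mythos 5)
Your strategy is genuinely different from the paper's: the paper proves \eqref{eq:compare} by applying the Bessel operator $v^2\partial_v^2+v\partial_v-4\pi^2m^2v^2$ to both sides, using \eqref{eq:beta-diff} to show the difference is annihilated, concluding it must be $b\,I_0+c\,K_0$, discarding $I_0$ by decay, and then pinning down $c$ from the $v\to\infty$ asymptotics via Lemma \ref{lemma:asymp}. Your plan --- collapse $\sum_n\tbe_{t_0}(\lambda_n\sqrt v)$ by the substitution $s=t\varepsilon_L^{-n}$ into a single integral over $(0,\infty)$ against the weight $\log s-n(s)\log\varepsilon_L=\log t_0+\log\varepsilon_L\{\log_{\varepsilon_L}(s/t_0)\}$ and compare directly with $\sum_n\beta_{t_0}(\lambda_n\sqrt v)$ --- is viable and, if completed, would be more self-contained (no ODE, no $I_0/K_0$ classification, no asymptotic matching). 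The telescoping, the evaluation of the constant piece via the unfolded Bessel integral, and the convergence points are all fine.

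However, the one step that actually produces $\Bb_1$ has a gap. Writing $x:=\log_{\varepsilon_L}(s/t_0)$, $x_0:=\log_{\varepsilon_L}(s_0/t_0)$ with $s_0^2=|\lambda_{0,1}/\lambda_{0,2}|$ and $g(s):=e^{-4\pi v((\lambda_0)^-_s)^2}$, what you need is
\begin{equation*}
\sum_{n\in\Zb}\beta_{t_0}(\lambda_n\sqrt v)\;=\;e^{-2\pi vQ_0(\lambda_0)}\int_0^\infty g(s)\,\bigl(\{x_0\}-\{x\}\bigr)\,\frac{ds}{s}.
\end{equation*}
Your proposed mechanism --- ``$\Bb_1$ is affine on each fundamental interval, so the Gaussian-weighted average equals $\Bb_1$ at the weighted mean'' --- does not apply: $g$ straddles \emph{every} period of the sawtooth, so no piecewise-affine reduction localizes the integral at $x_0$; and \eqref{eq:mock-maass-check}, which you propose to lean on instead, is the \emph{unweighted} identity and only reaches the $\log t$-weighted integral after an additional Abel-summation step that you do not supply. (Your earlier claim that the $\beta$-sum accounts for ``the fractional part minus its average $\tfrac12$'' is also not what happens --- taken literally it would produce no $\Bb_1$-term at all; the $\tfrac12$ only enters afterwards via $\{x_0\}=\Bb_1(x_0)+\tfrac12$.) The correct argument is short but different: the same substitution turns each $\beta_{t_0}(\lambda_n\sqrt v)$ into $\pm\int g\,\frac{ds}{s}$ over a half-line with endpoint $t_0\varepsilon_L^{-n}$, so the sum equals $e^{-2\pi vm}\int_0^\infty g(s)N(s)\frac{ds}{s}$ with $N(s)=\lfloor x\rfloor-\lfloor x_0\rfloor$ the signed count of points of $t_0\varepsilon_L^{\Zb}$ between $s_0$ and $s$; then $\{x_0\}-\{x\}=(x_0-x)+N(s)$, and $\int_0^\infty g(s)(x_0-x)\frac{ds}{s}=0$ because $g(s_0^2/s)=g(s)$ --- this is where the Gaussian symmetry you invoke is actually needed. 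With that identity (plus the half-integer bookkeeping when $x_0\in\Zb$, which you correctly flag), your decomposition yields $\tc_{t_0}(\Lambda)=\log t_0+\log\varepsilon_L\{x_0\}=\tfrac12\log(t_0^2\varepsilon_L)+\log\varepsilon_L\Bb_1(x_0)$ as required, so the route can be repaired.
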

\begin{proof}
  Denote $m:= Q_0(\lambda)$.
  From \eqref{eq:beta-diff}, we have
  $$
\lp  v^2  \partial_v^2
  + v\partial_v
  - 4 \pi^2 m^2 v^2 \rp
\lp \sum_{n \in \Zb}  \beta_{t_0}(\lambda_n \sqrt{v}) \rp
= 2 \pi v
\sum_{n \in \Zb}
(\lambda_n)_{t_0}^+ (\lambda_n)_{t_0}^- e^{-2\pi v(((\lambda_n)_{t_0}^+)^2 + ((\lambda_n)_{t_0}^-)^2)}.
$$
On the other hand, we have
\begin{align*}
&\lp  v^2  \partial_v^2
  + v\partial_v
  - 4 \pi^2 m^2 v^2 \rp
  \tbe_{t_0}(\lambda_n \sqrt{v})
  = -2 \pi v \log \varepsilon_L
   ((\lambda_{n-1})_{t_0}^+ (\lambda_{n-1})_{t_0}^-)
        e^{-2 \pi v (((\lambda_{n-1})^+_{t_0})^2 +   ((\lambda_{n-1})^-_{t_0})^2) }\\
&-  2\pi v \log (t_0)\lp  ((\lambda_{n-1})_{t_0}^+ (\lambda_{n-1})_{t_0}^-)
        e^{-2 \pi v (((\lambda_{n-1})^+_{t_0})^2 +   ((\lambda_{n-1})^-_{t_0})^2) }
    -   ((\lambda_n)_{t_0}^+ (\lambda_n)_{t_0}^-)    
    e^{-2 \pi v (((\lambda_n)^+_{t_0})^2 +   ((\lambda_n)^-_{t_0})^2) }
    \rp\\
  & - \frac{1}{4} \lp
        e^{-2 \pi v (((\lambda_{n-1})^+_{t_0})^2 +   ((\lambda_{n-1})^-_{t_0})^2) }
    -   
    e^{-2 \pi v (((\lambda_n)^+_{t_0})^2 +   ((\lambda_n)^-_{t_0})^2) }
    \rp.
\end{align*}
Therefore applying the operator $ v^2  \partial_v^2
  + v\partial_v
  - 4 \pi^2 m^2 v^2 $ to the left hand side of \eqref{eq:compare} gives us
  \begin{align*}
\lp     v^2  \partial_v^2
  + v\partial_v
    - 4 \pi^2 m^2 v^2 \rp
    \lp     \sum_{n \in \Zb} \tbe_{t_0} (\lambda_n \sqrt{v})\rp
    &=
 -2 \pi v \log \varepsilon_L
\sum_{n \in \Zb}
(\lambda_n)_{t_0}^+ (\lambda_n)_{t_0}^- e^{-2\pi v(((\lambda_n)_{t_0}^+)^2 + ((\lambda_n)_{t_0}^-)^2)}
  \end{align*}
  and
  \begin{align*}
\lp     v^2  \partial_v^2
  + v\partial_v
    - 4 \pi^2 m^2 v^2 \rp
    \lp     \sum_{n \in \Zb} \tbe_{t_0} (\lambda_n \sqrt{v})
    + \log \varepsilon_L
\sum_{n \in \Zb} \beta_{t_0} (\lambda_n \sqrt{v})    
    \rp = 0.
  \end{align*}
  The solutions to the differential equation $(v^2 \partial_v + v \partial_v - 4 \pi m^2 v^2 )f = 0$ are of the form
  $$
f = b \cdot I_0(2\pi|m|v) + c \cdot K_0(2\pi|m|v)
$$
with $I_0$ and $K_0$ the modified Bessel functions and $b, c$ constants.
Since $\beta(\lambda_0 \sqrt{v})$ and $\tbe(\lambda_0 \sqrt{v})$ both decay as $v \to \infty$, we have
$$
     \sum_{n \in \Zb} \tbe_{t_0} (\lambda_n \sqrt{v})
    + \log \varepsilon_L
    \sum_{n \in \Zb} \beta_{t_0} (\lambda_n \sqrt{v})
    = c \cdot K_0(2\pi |m| v). 
    $$
    Using the bound \eqref{eq:K0bound} in the proof of Lemma \ref{lemma:asymp} and equation \eqref{eq:beta2}, we have
    $$
\lim_{v \to \infty} \frac{    \sum_{n \in \Zb} \beta_{t_0} (\lambda_n \sqrt{v})}{K_0(2\pi |m| v)} = 0.
$$
\iffalse
For $\lambda_n=(\lambda_1,\lambda_2)$ such that $|\lambda_2/\lambda_1|t_0^2>1$ we have by the definition of $\beta_{t_0}$ in \eqref{eq:beta}
\begin{align*}
 \beta_{t_0} (\lambda_n \sqrt{v})=\frac12 K_0\lp2\pi |m|v;\log\lp|\lambda_2/\lambda_1|t_0^2\rp\rp
\end{align*}
and thus by Lemma \ref{lemma:asymp}
$$
\lim_{v \to \infty} \frac{\beta_{t_0} (\lambda_n \sqrt{v})}{K_0(2\pi |m| v)} = 0.
$$
\fi

On the other hand, a change of variable gives us
\begin{align*}
  \sum_{n \in \Zb}& \tbe_{t_0} (\lambda_n \sqrt{v})
  =
    \sum_{n\in\Zb}\int_{t_0|\lambda'_n/\lambda_n|^{\frac12}}^{t_0|\lambda'_{n-1}/\lambda_{n-1}|^{\frac12}}e^{-\pi v |m| (r^2 + r^{-2})}\log ( r |\lambda_n/\lambda'_n|^{\frac12}) \frac{dr}{r}\\
  &=
    \frac12\log|\lambda_0/\lambda'_0| K_0(2\pi |m| v)
    +
    \log\varep_L\sum_{n\in\Zb} n\int_{t_0|\lambda'_n/\lambda_n|^{\frac12}}^{t_0|\lambda'_{n-1}/\lambda_{n-1}|^{\frac12}}
    e^{-\pi |m| v(r^2+r^{-2})} \frac{dr}{r}.
\end{align*}
The second sum can be evaluated as
\begin{align*}
  \sum_{n\in\Zb}
  &n\int_{t_0|\lambda'_n/\lambda_n|^{\frac12}}^{t_0|\lambda'_{n-1}/\lambda_{n-1}|^{\frac12}}
    e^{-\pi |m| v(r^2+r^{-2})} \frac{dr}{r}\\
  &=   \sum_{n\ge 1}
    \lp \int_{0}^{t_0|\lambda'_{n-1}/\lambda_{n-1}|^{\frac12}}
    e^{-\pi |m| v(r^2+r^{-2})} \frac{dr}{r}
-  \int^{\infty}_{t_0|\lambda'_{-n}/\lambda_{-n}|^{\frac12}}
    e^{-\pi |m| v(r^2+r^{-2})} \frac{dr}{r}
    \rp\\
  &=
\frac12
    \sum_{n\ge 1}
    \lp
   K_0(2\pi |m| v, \log (t_0^{-2}|\lambda_{n-1}/\lambda_{n-1}'|)  )
-  K_0(2\pi |m| v, \log(t_0^2 |\lambda_{-n}'/\lambda_{-n}|))
    \rp.
\end{align*}
We now choose $\lambda_0$ such that $t_0^2 \le |\lambda_0 /\lambda_0'| < t_0^2 \varepsilon_L^2$, which implies that $t_0^{-2}|\lambda_0/\lambda_0'| \ge 1$, $t_0^{-2}|\lambda_n/\lambda_n'| > 1$ and  $t_0^{2}|\lambda_{-n}'/\lambda_{-n}| > 1$
for all $n \ge 1$. 
Applying Lemma \ref{lemma:asymp} and equation \eqref{eq:K0bound} gives us
\begin{align*}
  \lim_{v \to \infty} K_0(2\pi |m| v)^{-1}
    \sum_{n\in\Zb}
  n\int_{t_0|\lambda'_n/\lambda_n|^{\frac12}}^{t_0|\lambda'_{n-1}/\lambda_{n-1}|^{\frac12}}
  e^{-\pi |m| v(r^2+r^{-2})} \frac{dr}{r}
  =
  \frac12
  \begin{cases}
    1& \text{ if } t_0^2 = |\lambda_0/\lambda_0'|,\\
    0&\text{ otherwise.}
  \end{cases}
\end{align*}
This finishes the proof.
  % \todo[inline]{YL: show that both sides vanishes under $v^2 \partial_v^2 + 2 v \partial_v - 4 \pi^2 m^2 v^2 $ and the difference does not grow as $v$ goes to infinity. Then the difference is a multiple of $K_0(2\pi |m| v)$. Finally figure out the multiple, maybe using asymptotics (let $v$ goes to 0 or $\infty$).}
\end{proof}
% \todo[inline]{YL: find difference between
%  $
% \int^{t_0 \varepsilon_L}_{t_0}
% e^{-\pi v B_0(\lambda, Z^-_t)^2 } \log t
% \frac{dt}{t}
% $
% and $\alpha_{t_0}(\lambda \sqrt{v})$ in \cite{Zw12}.
% This diff should be eigenfunc of $\tD$.
% Find out independent solutions to $\tD f = 0$, and express this diff in terms of these solutions.
% }
Now, we can define the harmonic generating series
%%\todo[color=green]{CR: Do we need $\sqrt{v}$ as additional factor? YL: yes, added.}
\begin{equation}
  \label{eq:tvar+}
  \tv^{c_0, +}_{L+h}(\tau) := \sqrt{v}\sum_{\Lambda \in \Gamma_L\backslash L + h}
  \tc_{t_0}(\Lambda) \ebf(Q_0(\Lambda) u) K_0(2\pi |Q_0(\Lambda)| v). 
\end{equation}
As an immediate consequence of Proposition \ref{prop:compare}, we have the following result.
\begin{thm}
  \label{thm:main}
  Let $c_0 = c(t_0)$ with $t_0 > 0$. Then the real-analytic modular form $\tv^{c_0}_{L+h}$ has the decomposition
  \begin{equation}
    \label{eq:main}
    \tv^{c_0}_{L+h}(\tau) =     \tv^{c_0, +}_{L+h}(\tau) - \log(\varepsilon_L) \varphi^{c_0}_{L+h}(\tau)
  \end{equation}
  into the sum of a harmonic and a non-harmonic part.
  Furthermore for $c_i = c(t_i)$, the mock Maass theta function $\hv^{c_1, c_2, +}_{L+h}$ defined in \eqref{eq:vt} and its completion $\hv^{c_1, c_2}_{L+h}$ can be written as
  \begin{equation}
    \label{eq:vt-decomp}
    \begin{split}
      - \log(\varepsilon_L)     \hv^{c_1, c_2, +}_{L+h}(\tau)
      &=
        \tv^{c_1, +}_{L+h}(\tau)    - \tv^{c_2, +}_{L+h}(\tau),\\
      - \log(\varepsilon_L)     \hv^{c_1, c_2}_{L+h}(\tau)
      &=
        \tv^{c_1}_{L+h}(\tau)    - \tv^{c_2}_{L+h}(\tau).
    \end{split}
\end{equation}
Under the differential operator $\tD$, we have
\begin{equation}
  \label{eq:tD-tv}
  \tD \tv^{c_0}_{L+h}(\tau) = -\log (\varep_L) 2\pi \theta^{(1, 1)}_{L+h}(\tau, t_0).
\end{equation}
\end{thm}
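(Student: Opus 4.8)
The plan is to deduce Theorem \ref{thm:main} directly from Proposition \ref{prop:compare} together with the already-established Zwegers results (Theorems \ref{thm:Zw12} and \ref{thm:Zw-diff}), so that essentially no new analysis is required beyond bookkeeping with the $\Gamma_L$-orbit decomposition.

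First I would prove the decomposition \eqref{eq:main}. Starting from the definition of $\tv^{c_0}_{L+h}$ in \eqref{eq:tv} as $\sqrt{v}\sum_{\lambda \in L+h} \ebf(Q_0(\lambda)u) \tbe_{t_0}(\lambda\sqrt v)$, I would group the sum over $\lambda \in L+h$ into $\Gamma_L$-orbits $\Lambda = \Gamma_L\cdot\lambda_0$; this is legitimate because $Q_0(\lambda)$, hence $\ebf(Q_0(\lambda)u)$, is constant on each orbit (the units in $\Gamma_L$ have norm $1$), and because $L$ is anisotropic there is a unique trivial orbit, handled separately by the $\lambda=0$ case in \eqref{eq:beta}, \eqref{eq:tv}, \eqref{eq:cLam}. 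For each nontrivial orbit, Proposition \ref{prop:compare} replaces $\sum_n \tbe_{t_0}(\lambda_n\sqrt v)$ by $-\log\varep_L \sum_n \beta_{t_0}(\lambda_n\sqrt v) + \tc_{t_0}(\Lambda) K_0(2\pi|Q_0(\lambda_0)|v)$; here I must check that the value $\tc_{t_0}(\Lambda)$ produced by the proposition agrees with the closed form \eqref{eq:cLam}, which amounts to matching the limit computed at the end of that proof (which gives $\tfrac12$ exactly when $t_0^2 = |\lambda_0/\lambda_0'|$) and the constant $\tfrac12\log|\lambda_0/\lambda_0'|$ against the periodic Bernoulli polynomial $\Bb_1$ evaluated on $\tfrac{\log(|\lambda/\lambda'|t_0^{-2})}{2\log\varep_L}$ — I would fix the representative $\lambda_0$ with $t_0^2 \le |\lambda_0/\lambda_0'| < t_0^2\varep_L^2$ as in the proof so that the argument of $\Bb_1$ lies in $[0,1)$. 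Summing over all orbits, the $\beta_{t_0}$ terms reassemble into $-\log\varep_L$ times $\varphi^{c_0}_{L+h}$ (by the definition \eqref{eq:vt} of $\varphi^{c_0}_{L+h}$, again using that $\beta_{t_0}(w)$ depends only on $w$ and $Q_0$ is $\Gamma_L$-invariant), and the $K_0$ terms reassemble into $\tv^{c_0,+}_{L+h}$ as defined in \eqref{eq:tvar+}; the trivial orbit contributes the $\lambda=0$ constant-term pieces consistently on both sides. This yields \eqref{eq:main}, with the real-analytic modularity of $\tv^{c_0}_{L+h}$ inherited from that of $\theta_{L+h}(\tau,t)$ through the integral definition \eqref{eq:tv}.

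Next, for the two displays in \eqref{eq:vt-decomp}, I would combine \eqref{eq:main} at $t_1$ and $t_2$ with Zwegers' harmonic decomposition \eqref{eq:har-decomp}. Subtracting, the non-harmonic terms $-\log\varep_L(\varphi^{c_1}_{L+h} - \varphi^{c_2}_{L+h})$ on the right of $\tv^{c_1}_{L+h} - \tv^{c_2}_{L+h}$ are precisely $-\log\varep_L$ times the non-harmonic part of $\hv^{c_1,c_2}_{L+h}$ appearing in \eqref{eq:har-decomp}, so the identity $-\log\varep_L\,\hv^{c_1,c_2}_{L+h} = \tv^{c_1}_{L+h} - \tv^{c_2}_{L+h}$ is equivalent to the ``harmonic part'' identity $-\log\varep_L\,\hv^{c_1,c_2,+}_{L+h} = \tv^{c_1,+}_{L+h} - \tv^{c_2,+}_{L+h}$. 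For the latter I would argue coefficient-by-coefficient: the $\ebf(Q_0(\lambda)u)K_0(\dots)$-coefficient of the left side is $-\log\varep_L$ times $\sum_{\lambda \in \Lambda} a_\lambda(t_1,t_2)$ (summing the orbit, using the simplified form \eqref{eq:alam2} of $a_\lambda$), while the coefficient on the right is $\tc_{t_1}(\Lambda) - \tc_{t_2}(\Lambda)$; one then checks via \eqref{eq:cLam} that $\tc_{t_1}(\Lambda) - \tc_{t_2}(\Lambda) = -\log\varep_L \sum_{\lambda\in\Lambda} a_\lambda(t_1,t_2)$. Using $\Bb_1(x) = x - \tfrac12\delta_{x\neq 0}$, the difference $\log\varep_L[\Bb_1(\tfrac{\log(|\lambda/\lambda'|t_1^{-2})}{2\log\varep_L}) - \Bb_1(\tfrac{\log(|\lambda/\lambda'|t_2^{-2})}{2\log\varep_L})]$ has a linear part that telescopes to $\tfrac12\log(t_2^2/t_1^2)$ plus a correction from the floor/jump terms, and this correction counts exactly the orbit elements with $t_1^2 < |\lambda_1/\lambda_2| < t_2^2$ (with weight $\tfrac12$ at the endpoints), which is $\sum_{\lambda\in\Lambda} a_\lambda(t_1,t_2)$ by \eqref{eq:alam2} — the $\tfrac12\log(t_0^2\varep_L)$ constant in \eqref{eq:cLam} cancels in the difference. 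I would treat the trivial orbit via the $Q_0(\Lambda)=0$ case of \eqref{eq:cLam}, where $\tc_{t_1} - \tc_{t_2} = \tfrac12\log\varep_L\log(t_1^2/t_2^2) = -\log\varep_L\log(t_2/t_1) = -\log\varep_L\, a_0(t_1,t_2)$, matching.

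Finally, \eqref{eq:tD-tv} follows immediately: apply $\tD$ to \eqref{eq:main}. The term $\tv^{c_0,+}_{L+h}$ is annihilated by $\tD$ since each summand $\sqrt v\,\ebf(Q_0(\Lambda)u)K_0(2\pi|Q_0(\Lambda)|v)$ is, by the defining Bessel ODE \eqref{eq:K0} (this is the whole point of Cohen's and Maass' examples, and $\tD = \Delta_\tau + \tfrac14$ acts on $\sqrt v\,\ebf(mu)K_0(2\pi|m|v)$ as zero — the $K_0$ equation plus the $\tfrac14$ shift). Hence $\tD\tv^{c_0}_{L+h} = -\log\varep_L\,\tD\varphi^{c_0}_{L+h}$, and by the first line of \eqref{eq:diff-eq1} in Theorem \ref{thm:Zw-diff} this equals $-\log\varep_L\cdot 2\pi\,\theta^{(1,1)}_{L+h}(\tau,t_0)$, as claimed. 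The main obstacle I anticipate is not conceptual but the careful matching of the constant in \eqref{eq:cLam}: one must verify that the representative choice $t_0^2 \le |\lambda_0/\lambda_0'| < t_0^2\varep_L^2$ makes $\tc_{t_0}(\Lambda)$ independent of the chosen representative of $\Lambda$ (a consistency check on $\Bb_1$ being $\Zb$-periodic combined with the $\varep_L^2$-scaling of $|\lambda/\lambda'|$ along the orbit), and that the half-integer jump terms in $\Bb_1$ line up exactly with the endpoint weights $\tfrac12$ in \eqref{eq:alam2}; everything else is Proposition \ref{prop:compare} plus linear algebra.
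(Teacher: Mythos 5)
Your proposal is correct and takes essentially the same route as the paper: \eqref{eq:main} from Proposition \ref{prop:compare} by summing over $\Gamma_L$-orbits, the key coefficient identity $\tc_{t_1}(\Lambda)-\tc_{t_2}(\Lambda)=-\log(\varep_L)\sum_n a_{\lambda_n}(t_1,t_2)$ verified by the same Bernoulli-polynomial/endpoint bookkeeping (this is exactly the paper's \eqref{eq:compare2}), the other line of \eqref{eq:vt-decomp} via \eqref{eq:har-decomp}, and \eqref{eq:tD-tv} from \eqref{eq:main} and \eqref{eq:diff-eq1}. The only cosmetic difference is the order in which the two lines of \eqref{eq:vt-decomp} are deduced from one another.
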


\begin{proof}
  Equation \eqref{eq:main} follows directly from Proposition \ref{prop:compare}.
  To prove the first equation in \eqref{eq:vt-decomp}, we first check that the constant terms are both $-\log (\varep_L) \log(t_2/t_1)$.
Then  it suffices to check 
\begin{equation}
  \label{eq:compare2}
  -\log (\varep_L) \sum_{n \in \Zb} a_{\lambda_n}(t_1, t_2)
  = \tc_{t_1}(\Lambda)  -  \tc_{t_2}(\Lambda) 
\end{equation}
  for all non-trivial $\lambda_0 \in L^*$, where we have denoted $\lambda_n := \lambda_0 \varep_L^n$ and $\Lambda := \Gamma_L \cdot \lambda_0$.
  We can choose a unique $\lambda_0$ within the orbit $\Lambda$ such that
  $$
t_1 \le |\lambda_0/\lambda_0'|^{1/2} < t_1 \varep_L. 
$$
  Let $m := Q_0(\lambda_0) \in \Qb^\times$  and  $N \in \Nb_0$ such that
  $$
  t_1 \varep_L^{N} \le t_2 < t_1 \varep_L^{N+1}.
  $$
  Suppose $|\lambda_N/\lambda_N'| \le t_2$.
  Then $t_2 < t_1 \varep_L^{N+1} \le |\lambda_0/\lambda_0'|^{1/2} \varep_L^{N+1} = |\lambda_{N+1}/\lambda_{N+1}'|^{1/2}$, and
 $t_1 \le |\lambda_n/\lambda_n'|^{1/2} \le t_2$ precisely when $0 \le n \le N$. 
  The two equalities could only hold for $n = 0, N$, and only one holds when $N = 0$ since $t_2 > t_1$.
  Therefore, we have
  %\todo[inline]{I left out the first line here, because the notation with $-\sgn(m)$ seemed to be ambiguous to me}
  \begin{align*}
    % -    LHS/\log(\varep_L)
    % &=
      \sum_{n \in \Zb} a_{\lambda_n}(t_1, t_2)          
    %&=
      %\sum_{n \in \Zb}
      %\frac{1}{2} \lp 1 - \sgn((\lambda_n)_{t_1}^{-\sgn(m)} (\lambda_n)_{t_2}^{-\sgn(m)}) \rp\\
      &=
        \#       \{n \in \Zb: t_1 < |\lambda_n/\lambda_n'|^{1/2} < t_2\}
        + \frac12 \{n \in \Zb: t_i = |\lambda_n/\lambda_n'|^{1/2} \text{ for } i = 1 \text{ or } 2\}\\
      &= N + 1- \frac12 \lp \delta_{t_1^2 = |\lambda_0/\lambda_0'|}  +
        \delta_{t_2^2 = |\lambda_N/\lambda_N'|}  \rp. 
  \end{align*}
  On the other hand, we have
  \begin{align*}
- &\frac{    \tc_{t_1}(\Lambda)  -  \tc_{t_2}(\Lambda)}{\log \varep_L}
    = \frac{\log(t_2/t_1)}{\log \varep_L} -
      \lp
      \Bb_1      \lp \frac{\log (|\lambda_0/\lambda_0'| t_1^{-2})}{2\log \varep_L} \rp
      -
      \Bb_1      \lp \frac{\log (|\lambda_N/\lambda_N'| t_2^{-2})}{2\log \varep_L} \rp            \rp\\
    &=  \frac{\log(t_2/t_1)}{\log \varep_L}
      - \lp
\frac{\log (|\lambda_0/\lambda_0'| t_1^{-2})}{2\log \varep_L}  
      - \frac{\delta_{t_1^2 \neq |\lambda_0/\lambda_0'|}}{2} 
      \rp
     +
      \lp
 \frac{\log (|\lambda_N/\lambda_N'| t_2^{-2})}{2\log \varep_L}  
      + \frac{\delta_{t_2^2 \neq |\lambda_N/\lambda_N'|}}{2} 
      \rp\\
  &= N + \frac12 \lp  \delta_{t_1^2 \neq |\lambda_0/\lambda_0'|} + \delta_{t_2^2 \neq |\lambda_N/\lambda_N'|}\rp
    = N + 1 - \frac12 \lp  \delta_{t_1^2 = |\lambda_0/\lambda_0'|} + \delta_{t_2^2 = |\lambda_N/\lambda_N'|}\rp
  \end{align*}
since $1 \le t_1^{-1}|\lambda_0/\lambda_0'|^{1/2} < \varepsilon_L$ and 
$\varep_L^{-1} < t_2^{-1}|\lambda_N/\lambda_N'|^{1/2} \le 1$.
The case $t_2 < |\lambda_N/\lambda_N'|$ can be handled similarly.
The second equation in \eqref{eq:vt-decomp} follows from \eqref{eq:har-decomp}, \eqref{eq:main} and the first equation in \eqref{eq:vt-decomp}.
Finally, equation \eqref{eq:tD-tv} follows from \eqref{eq:main} and \eqref{eq:diff-eq1}.
% This finishes the proof.
%%  \todo[inline, color=red]{YL: first part is clear. Put in the proof of the second part by direct comparison.}
\end{proof}

\begin{exmp}
\label{ex:cohen2}
  Let $L = L_{\af, M}$ be the lattice in Example \ref{ex:cohen} and recall that we chose $t_1=1$ and $t_2=\varep=5+2\sqrt{6}$.
  For $i = 1, 2$ and $h$ in \eqref{eq:hs}, the function $\theta^{(1, 1)}_{L+h}(\tau, t_i)$ vanishes identically, and $\tv^{c_i}_{{L}+h} = \tv^{c_i, +}_{{L}+h}$ is a Maass form.
  We compute the Fourier expansions of the theta series $ \tv^{c_i, +}_{ {L}+h}$ given in  \eqref{eq:tvar+} by calculating $\tc_i(\Lambda)$ as defined in \eqref{eq:cLam}.
  Note that $\Gamma_L=\langle\varep_L\rangle$, where $\varep_L=\varep^2$. We thus have
\begin{align*}
  	(\log \varep_L)^{-1}\tc_1(\Lambda)&=\frac12+\Bb_1\lp\frac{\log(|\lambda/\lambda'|)}{4\log(\varep)}\rp,\\
  	(\log \varep_L)^{-1}\tc_2(\Lambda)&=1+\Bb_1\lp\frac{\log(|\lambda/\lambda'|)}{4\log(\varep)}-\frac12\rp.
\end{align*}
Since $\varep_L$ preserves the set $L+h$ for all $h$ given above, it is sufficient to compute $c_i(\Lambda)$ for one representative $\Lambda \in \Gamma_L\backslash L + h$.
To evaluate the periodic Bernoulli polynomials, note that
\begin{equation}
  \label{eq:Bernoulli}
  \Bb_1(x + n) + \Bb_1(-x) = 0
\end{equation}
for all $x \in \Rb, n \in \Zb$.
Using table \ref{table:ideals}, we obtain for $i = 1, 2$ and $h$ as in \eqref{eq:hs}
\begin{equation}
  \label{eq:tvMaass}
  \tv_{L+h}^{c_i, +}(\tau)
  =
i \frac{\vartheta_{L+h}(\tau)}{8} .
\end{equation}
Now it is easy to see that
  \begin{align*}
    \varphi_0 &=
  - (i \log \varep_L)^{-1}\lp\tv^{c_i}_{L+\frac12 {}}
- \tv^{c_i}_{L+\frac72 {}}
- \tv^{c_i}_{L+\frac12+\sqrt{6}}
                +  \tv^{c_i}_{L+\frac72+\sqrt{6}}\rp\\
              &=
  - ( \log \varep_L)^{-1}\lp\tv^{c_2}_{L+\frac12 {}}
- \tv^{c_2}_{L+\frac72 {}}
- \tv^{c_2}_{L+\frac12+\sqrt{6}}
                +  \tv^{c_2}_{L+\frac72+\sqrt{6}}\rp\\
&\quad  + ( \log \varep_L)^{-1}\lp\tv^{c_1}_{L+\frac12 {}}
- \tv^{c_1}_{L+\frac72 {}}
- \tv^{c_1}_{L+\frac12+\sqrt{6}}
                +  \tv^{c_1}_{L+\frac72+\sqrt{6}}\rp.
  \end{align*}
%%%  \todo[inline,color=green]{put in FE for Cohen example above and check $\varphi_0 = \varphi_1 - \varphi_2$. CR: Seems to be correct for small Fourier coefficients. I added some computations in a section at the end, which we can probably just comment out.\\    YL: checked.}
  \end{exmp}
  \begin{exmp}
\label{ex:non-trivial}
    Let $F$ be the same as in Example \ref{ex:cohen}. Take $L = L_{\af, M}$ with $\af = \Oc = \Zb + \sqrt{6}\Zb$ and $M = 2$.
    Then $L^* = \frac12 \Zb + \frac{\sqrt6}{12}\Zb, \varep_L = \varep^2 = (5 + 2\sqrt6)^2 = 49 + 20\sqrt6$,
    and it is easy to check that
    $$
    \theta^{(1, 1)}_{L + h}(\tau, t_0)
    = - \frac{\sqrt6}{24} \eta(\tau)^3 \overline{g(\tau)}v^{3/2}
    $$
    for $h =  \frac12 + \frac{\sqrt6}{12}$ and $t_0 = 1$, where $$g(\tau) = q^{1/48}(1 - 23q^{11} + 25q^{13} - 47q^{46} + 49q^{50} - 71q^{105} + 73q^{111} - 95q^{188} + 97q^{196} + O(q^{200}))$$ is a modular form of weight $3/2$. 
    Using the isomorphism
    $$
\Gamma_L \backslash (L + h \sqcup L + h') \cong \{\bfrak = (\beta) \subset \Oc: \Nm(\beta) \equiv -5 \bmod{48}\},~ [\lambda] \mapsto (2\sqrt{6}\lambda),
$$
with $h' =   \frac12 - \frac{\sqrt6}{12}$, 
we can write
\begin{align*}
  \sqrt{12}^{-1}
  &\tv_{L+h}^{c_0, +}(48\tau)
  =
    \dots + \lp 3 \log \varep + \log \frac{29+6\sqrt6}{25} \rp W_{-235}(\tau)\\
    &+ \lp \log \varep + \log \frac{155+28\sqrt6}{139}\rp W_{-139}(\tau)
    + \lp 2\log \varep + \log \frac{55 + 14\sqrt6}{43} \rp W_{-43}(\tau)\\
      &+ \log \frac{7+2\sqrt6}{5}W_{5}(\tau) + \lp 4\log \varep+ \frac{55-6\sqrt6}{53}\rp W_{53}(\tau) \\
  &+ \lp 3 \log \varep + \log \frac{155 - 48\sqrt6}{101}\rp W_{101}(\tau)
+ \log \frac{151+10\sqrt6}{149}W_{149}(\tau) +    \dots.
\end{align*}
%  \todo[inline]{Put in a non-trivial example, e.g. $t_1 = 1, \af = \Zb[\sqrt{2}], M = 2, 3$. }
\end{exmp}
\section{Action under $\tD$.}
% \todo[inline]{%YL: specialize to $P = X$ and arbitrary $t_1, t_2$ to get preimage of $\theta_{L+h}(\tau, t_2) - \theta_{L+h}(\tau, t_1)$.\\  
%   Add discussion for preimage of $\theta_{L+h}(\tau, t_0)$ and $\vartheta_{L+h}(\tau)$, which seems to be a lot more difficult. }
In this section, we will generalize the mock Maass theta functions studied in \ref{subsec:mMt2}, and study their images under the differential operator $\tD$.
In the notation of Sections \ref{subsec:mock1} and \ref{subsec:mMt2}, consider
\begin{equation}
  \label{eq:higher1}
  \hv^{c_1, c_2}_{L+h}(\tau; P)
:=  \int^{ t_2}_{ t_1} \theta_{L+h}(\tau, t)P(\log t) \frac{dt}{t}
= \int^{ \nu_2}_{ \nu_1} \theta_{L+h}(\tau,  e^{\nu}) P(\nu) d\nu
\end{equation}
for any polynomial $P(X) \in \Cb[X]$.
This defines a real-analytic modular function in $\tau$.
When $P$ is the constant function, this is just the function $\hv^{c_1, c_2}_{L+h}(\tau)$ defined in \eqref{eq:int-c1-c2}.
When $P(X) = X$ and $t_i = \varepsilon_L^{i-1} t_0$, this is the function $\tv^{c_0}_{L+h}$ from \eqref{eq:tv}. 
Just as the function $\hv^{c_1, c_2}_{L+h}$ in Section \ref{subsec:mock1}, the generalization $\hv^{c_1, c_2}_{L+h}(\tau; P)$ has the following Fourier expansion
\begin{equation}
  \label{eq:FEhvP}
  \hv^{c_1, c_2}_{L + h} (\tau;P)
  ={\sqrt{v}}
\sum_{\lambda \in L + h}
\ebf\lp
Q_0(\lambda)\tau\rp
\int^{t_2}_{t_1}
e^{-\pi v B_0(\lambda, Z^-_t)^2 }
P(\log t)
\frac{dt}{t}.
\end{equation}
When $t_2 = \varepsilon_L t_1$, we simply write $\hv^{c_1}_{L+h}$ for  $\hv^{c_1, c_2}_{L+h}$.

To understand the action of $\tD$ on this function, we first record a standard differential operator relation satisfied by the Siegel theta kernel.
\begin{lemma}
  \label{lemma:diffop}
In the notation above, we have
\begin{equation}
  \label{eq:diffeq}
  4\tD \theta_{L + h}(\tau, t) = (t \partial_t)^2  \theta_{L + h}(\tau, t).
\end{equation}
\end{lemma}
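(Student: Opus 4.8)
The plan is to prove \eqref{eq:diffeq} term by term over the lattice. Writing out \eqref{eq:theta_explicit} with $p=1$ and $n^+=n^-=1$, the $\lambda$-th summand of $\theta_{L+h}(\tau,t)$ is
\[
  f_\lambda(\tau,t) := \sqrt{v}\,\ebf(Q_0(\lambda)u)\, e^{-\pi v S_\lambda(t)},\qquad
  S_\lambda(t) := \lambda_1^2 t^{-2}+\lambda_2^2 t^2 = 2\lp(\lambda_t^+)^2+(\lambda_t^-)^2\rp,
\]
where the last equality is \eqref{eq:lam-id}; the same relation gives $Q_0(\lambda)=(\lambda_t^+)^2-(\lambda_t^-)^2$, so that $S_\lambda(t)^2-4Q_0(\lambda)^2=\lp\lambda_1^2 t^{-2}-\lambda_2^2 t^2\rp^2$. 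First I would record the two elementary identities
\[
  (t\partial_t)^2 S_\lambda = 4S_\lambda,\qquad \lp(t\partial_t)S_\lambda\rp^2 = 4\lp S_\lambda^2-4Q_0(\lambda)^2\rp,
\]
both immediate from $S_\lambda(t)=\lambda_1^2 t^{-2}+\lambda_2^2 t^2$.

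Next I would evaluate both sides of \eqref{eq:diffeq} on $f_\lambda$. Since $f_\lambda$ depends on $t$ only through the factor $e^{-\pi v S_\lambda(t)}$, one has $t\partial_t f_\lambda = -\pi v\,(t\partial_t S_\lambda)\, f_\lambda$, and hence, using the two identities above,
\[
  (t\partial_t)^2 f_\lambda = \lp -4\pi v S_\lambda + 4\pi^2 v^2\lp S_\lambda^2-4Q_0(\lambda)^2\rp\rp f_\lambda.
\]
On the other side, $\partial_u^2 f_\lambda = -4\pi^2 Q_0(\lambda)^2 f_\lambda$ and $\partial_v f_\lambda = \lp\tfrac{1}{2v}-\pi S_\lambda\rp f_\lambda$, so that collecting terms yields
\[
  \Delta_\tau f_\lambda = v^2(\partial_u^2+\partial_v^2)f_\lambda = \lp \pi^2 v^2\lp S_\lambda^2-4Q_0(\lambda)^2\rp-\pi v S_\lambda-\tfrac14\rp f_\lambda;
\]
adding $\tfrac14 f_\lambda$ and multiplying by $4$ reproduces $(t\partial_t)^2 f_\lambda$ exactly. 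Summing over $\lambda\in L+h$ then gives \eqref{eq:diffeq}.

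Since the calculation is short, the only step that really needs a word of justification — the closest thing to an obstacle — is that one may apply $\partial_u$, $\partial_v$ and $t\partial_t$ to $\sum_\lambda f_\lambda$ term by term. This is routine: on any compact subset of $\Hb\times\Rb_{>0}$ the quadratic form $\lambda\mapsto S_\lambda(t)$ is positive definite with a uniform positive lower bound on its smallest eigenvalue, and each of $\partial_u$, $\partial_v$, $t\partial_t$ introduces only factors polynomial in the coordinates of $\lambda$ and in $t^{\pm1}$, so every series obtained this way converges absolutely and locally uniformly. Alternatively, \eqref{eq:diffeq} can be recovered from the general differential theory of Siegel theta kernels in \cite{Bo98}, with $(t\partial_t)^2$ a normalization of the invariant Laplacian on the one-dimensional symmetric space $\Dc$; but the direct computation above is the quickest route.
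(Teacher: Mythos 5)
Your proof is correct and follows essentially the same route as the paper's: both apply $(t\partial_t)^2$ and $\tD$ to each lattice summand of \eqref{eq:theta_explicit} and match the resulting multipliers (your expressions in $S_\lambda$ and $Q_0(\lambda)$ translate into the paper's in $\lambda_t^\pm$ via \eqref{eq:lam-id}). The only difference is that you also justify termwise differentiation, which the paper leaves implicit.
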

\begin{proof}
	%  Using \eqref{eq:theta_explicit} as definition of the theta series,
	One can show that the function $f(\tau, t)$ defined by
	\begin{equation*}
		\begin{split}
			f(\tau,t)&:= \phi_\tau(\lambda_t^+, \lambda_t^-),~
			\phi_\tau(x, y) :=
			\sqrt{v}  \ebf \lp {x^2}{} \tau - {y^2}{} \overline{\tau} \rp,     
			%                 \sqrt{v}  \ebf \lp \frac{x^2}{2} \tau - \frac{y^2}{2} \overline{\tau} \rp,\\
			% x(t) &:= \frac{(\lambda t^{-1} + \lambda' t)^2}{2AM},~
			% y(t) := \frac{(\lambda t^{-1} - \lambda' t)^2}{2AM},
		\end{split}
		% \ebf\lp
		%       \frac{
		%       	(\lambda t^{-1} + \lambda' t)^2 \tau - (\lambda t^{-1} -  \lambda't)^2 \overline{\tau}
		%       }{4AM}
		%       \rp
	\end{equation*}
	satisfies the identity
%	\todo[color=green]{YL: modify below.}
	\begin{align*}
		%(t \partial_t)^2  f(\tau, t)=4\lp\lp\frac{\pi v}{AM}\rp^2((\lambda't)^2-(\lambda t^{-1})^2)^2-\frac{\pi v}{AM}((\lambda't)^2+(\lambda t^{-1})^2)\rp f(\tau, t).
		(t \partial_t)^2  f(\tau, t)&=\lp\lp2\pi v\rp^2\lp(\lambda_1 t^{-1})^2-(\lambda_2t)^2\rp^2-4\pi v\lp(\lambda_1 t^{-1})^2+(\lambda_2t)^2\rp\rp f(\tau, t)\\
		&=4\lp\lp 4\pi v \lambda_t^+ \lambda_t^-\rp^2-2\pi v \lp (\lambda_t^+)^2+(\lambda_t^-)^2\rp\rp f(\tau, t).
	\end{align*}
	On the other hand, a straightforward calculation yields
%	\todo[color=green]{YL: modify below.}
	\begin{equation*}
		\tD \phi_{\tau}(x,y)=\lp (4\pi vxy)^2-2\pi v(x^2+y^2) \rp\phi_{\tau}(x,y).
	\end{equation*}
	\iffalse
	satisfies the identity
	\begin{equation*}
		(t \partial_t)^2  f(\tau, t)=4\lp\lp\frac{\pi v}{AM}\rp^2((\lambda't)^2-(\lambda t^{-1})^2)^2-\frac{\pi v}{AM}((\lambda't)^2+(\lambda t^{-1})^2)\rp f(\tau, t).
	\end{equation*}
	On the other hand, a straightforward calculation yields
	\begin{equation*}
		\tD \phi_{\tau}(x,y)=\lp (2\pi vxy)^2-\pi(x^2+y^2) \rp\phi_{\tau}(x,y).
	\end{equation*}
	\fi
	By the definition of the theta series in \eqref{eq:theta_explicit}, we deduce that \eqref{eq:diffeq} holds.
	% and noting that
	% \begin{equation*}
	% 	B(\iota(\lambda),Z_t^\pm)=\frac{1}{\sqrt{2AM}}(\lambda't\pm\lambda t^{-1}),
	% \end{equation*}
\end{proof}

Now the image of $\hv^{c_1, c_2}_{L+h}$ under the operator $\tD$ is given by the following lemma.
\begin{lemma}
  \label{lemma:tDP}
  We have
  \begin{multline*}
  	4\tD \hv^{c_1, c_2}_{L+h}(\tau; P)= 
  	(t \partial_t \theta_{L+h})(\tau,  t_2) P(\nu_2) -  (t \partial_t \theta_{L+h})(\tau,  t_1) P(\nu_1)\\
  	\quad -
  	\lp 
  	\theta_{L+h}(\tau,  t_2) P'(\nu_2) -   \theta_{L+h}(\tau,  t_1) P'(\nu_1)
  	\rp + \hv^{c_1, c_2}_{L+h}(\tau; P'').
  \end{multline*}
\end{lemma}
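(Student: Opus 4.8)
The plan is to combine Lemma~\ref{lemma:diffop} with integration by parts applied to the $\nu$-integral representation in \eqref{eq:higher1}. First I would apply $4\tD$ to the second expression $\int_{\nu_1}^{\nu_2} \theta_{L+h}(\tau, e^\nu) P(\nu)\, d\nu$ and move the operator inside the integral (the integral is over a compact interval and everything is smooth, so this is harmless). Since $\tD$ acts only in the $\tau$-variable, Lemma~\ref{lemma:diffop} turns $4\tD \theta_{L+h}(\tau, e^\nu)$ into $(t\partial_t)^2 \theta_{L+h}(\tau, t)\big|_{t = e^\nu} = \partial_\nu^2 \big(\theta_{L+h}(\tau, e^\nu)\big)$, using the identity $t\partial_t = \partial_\nu$ under $t = e^\nu$. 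Thus we obtain
\[
4\tD \hv^{c_1, c_2}_{L+h}(\tau; P) = \int_{\nu_1}^{\nu_2} \partial_\nu^2\big(\theta_{L+h}(\tau, e^\nu)\big)\, P(\nu)\, d\nu.
\]

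The second step is two rounds of integration by parts in $\nu$. Writing $\Theta(\nu) := \theta_{L+h}(\tau, e^\nu)$, a first integration by parts gives $\int \Theta''(\nu) P(\nu)\,d\nu = \big[\Theta'(\nu) P(\nu)\big]_{\nu_1}^{\nu_2} - \int \Theta'(\nu) P'(\nu)\, d\nu$, and a second one gives $\int \Theta'(\nu) P'(\nu)\, d\nu = \big[\Theta(\nu) P'(\nu)\big]_{\nu_1}^{\nu_2} - \int \Theta(\nu) P''(\nu)\, d\nu$. Assembling these and translating back via $\Theta'(\nu) = (t\partial_t\theta_{L+h})(\tau, t)\big|_{t=e^\nu}$ and $\nu_i = \log t_i$ yields exactly the claimed formula, with the final integral $\int_{\nu_1}^{\nu_2} \theta_{L+h}(\tau, e^\nu) P''(\nu)\, d\nu = \hv^{c_1, c_2}_{L+h}(\tau; P'')$ by definition \eqref{eq:higher1}.

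There is essentially no serious obstacle here; the only points requiring a word of care are (i) justifying that $\tD$ commutes with $\int_{\nu_1}^{\nu_2}(\cdot)\, d\nu$, which follows from uniform convergence of the theta series and its $\tau$-derivatives on compacta (the Siegel theta kernel is rapidly decaying, cf.\ \eqref{eq:theta_explicit}), and (ii) the bookkeeping of the boundary terms and signs across the two integrations by parts. I would present the argument compactly: state $t\partial_t = \partial_\nu$, invoke Lemma~\ref{lemma:diffop} to reduce $4\tD$ on the kernel to $\partial_\nu^2$, then integrate by parts twice. The main ``risk'' is purely clerical — matching the $P(\nu_i)$, $P'(\nu_i)$ terms and the residual $\hv^{c_1,c_2}_{L+h}(\tau; P'')$ term with the correct signs — so I would double-check the endpoint evaluations rather than worry about convergence.
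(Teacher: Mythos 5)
Your proposal is correct and is essentially identical to the paper's proof: both apply Lemma~\ref{lemma:diffop} under the integral sign to replace $4\tD$ by $\partial_\nu^2$ via $t\partial_t = \partial_\nu$, and then integrate by parts twice to produce the boundary terms and the residual $\hv^{c_1,c_2}_{L+h}(\tau;P'')$. No gaps.
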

\begin{rmk}\label{rem:theta_id}
  Note that $  	t \partial_t \theta_{L+h}(\tau,  t) = - 8\pi \theta^{(1, 1)}_{L+h}(\tau,  t)$. 
\end{rmk}
\begin{proof}
We apply Lemma \ref{lemma:diffop} to obtain
\begin{align*}
	4\tD &\hv^{c_1, c_2}_{L+h}(\tau; P)
	= 4\int^{ \nu_2}_{ \nu_1} \tD \theta_{L+h}(\tau,  e^{\nu}) P(\nu) d\nu 
	= \int^{ \nu_2}_{ \nu_1} (\partial_\nu^2 \theta_{L+h}(\tau,  e^{\nu})) P(\nu) d\nu \\
	&= 
	(t \partial_t \theta_{L+h})(\tau,  t_2) P(\nu_2) -  (t \partial_t \theta_{L+h})(\tau,  t_1) P(\nu_1)
	-  \int^{ \nu_2}_{ \nu_1} (\partial_\nu \theta_{L+h})(\tau,  e^{\nu}) P'(\nu) d\nu \\
	&= 
	(t \partial_t \theta_{L+h})(\tau,  t_2) P(\nu_2) -  (t \partial_t \theta_{L+h})(\tau,  t_1) P(\nu_1)
	-
	\lp 
	\theta_{L+h}(\tau,  t_2) P'(\nu_2) -   \theta_{L+h}(\tau,  t_1) P'(\nu_1)
	\rp\\
	&\quad + \hv^{c_1, c_2}_{L+h}(\tau; P'').\qedhere
\end{align*}
\end{proof}
We can now specialize $P$ and $c_i$ to obtain the following corollary, which immediately follows from equation \eqref{eq:tD-tv}, where the image of $\tv^{c_i}_{L+h}$ is given, and Remark \ref{rem:theta_id}.
%\todo[inline]{I added something here, so that one finds the ingredients for proving the corollary}
\begin{cor}
  \label{cor:higher}
  For   $c_i = c(t_i)$ with $t_i > 0$, we have
  \begin{equation}
    \label{eq:higher}
    \begin{split}
    	 4\tD
    	 &\lp  \hv^{c_1, c_2}_{L+h}(\tau; X)
    	+ \frac{\log t_1  }{\log \varep_L} \tv^{c_1}_{L+h}(\tau)    
    	- \frac{\log t_2  }{\log \varep_L} \tv^{c_2}_{L+h}(\tau)
    	\rp
    	= \theta^{}_{L+h}(\tau, t_1) - \theta^{}_{L+h}(\tau, t_2),\\
         %\tD
   %& \lp - 4  \hv^{c_1, c_2}_{L+h}(\tau; X)
    %+ \frac{\log t_1  }{\log \varep_L} \tv^{c_1}_{L+h}(\tau)    
   % - \frac{\log t_2  }{\log \varep_L} \tv^{c_2}_{L+h}(\tau)
    %\rp
    %= \theta^{}_{L+h}(\tau, t_1) - \theta^{}_{L+h}(\tau, t_2),\\
         \tD
     &\lp   \hv^{c_0}_{L+h}(\tau; X^2)
    - 4 \log (t_0^2 \varep_L) \tv^{c_0}_{L+h}(\tau)    
    \rp
    = - 2\log \varep_L \theta^{}_{L+h}(\tau, t_0) + 2\vartheta^{}_{L+h}(\tau).
    \end{split}
  \end{equation}
\end{cor}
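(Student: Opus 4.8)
The plan is to read off both identities from Lemma~\ref{lemma:tDP} by specializing the polynomial $P$, and then to absorb the unwanted $\theta^{(1,1)}_{L+h}$-contributions using \eqref{eq:tD-tv} together with the identity $t\partial_t\theta_{L+h}(\tau,t) = -8\pi\,\theta^{(1,1)}_{L+h}(\tau,t)$ of Remark~\ref{rem:theta_id}. For the first identity I would take $P(X)=X$, so that $P''\equiv 0$ and the tail term $\hv^{c_1,c_2}_{L+h}(\tau;P'')$ in Lemma~\ref{lemma:tDP} vanishes. Writing $\nu_i=\log t_i$ and replacing each $t\partial_t\theta_{L+h}(\tau,t_i)$ by $-8\pi\,\theta^{(1,1)}_{L+h}(\tau,t_i)$ gives
\[
4\tD\,\hv^{c_1,c_2}_{L+h}(\tau;X) = -8\pi\bigl(\log t_2\,\theta^{(1,1)}_{L+h}(\tau,t_2)-\log t_1\,\theta^{(1,1)}_{L+h}(\tau,t_1)\bigr)-\bigl(\theta_{L+h}(\tau,t_2)-\theta_{L+h}(\tau,t_1)\bigr).
\]
By \eqref{eq:tD-tv} (applied with $c_0=c_i$) we have $4\tD\,\tv^{c_i}_{L+h}(\tau) = -8\pi\log\varep_L\,\theta^{(1,1)}_{L+h}(\tau,t_i)$, so $4\tD\bigl(\tfrac{\log t_i}{\log\varep_L}\tv^{c_i}_{L+h}\bigr) = -8\pi\log t_i\,\theta^{(1,1)}_{L+h}(\tau,t_i)$; adding $\tfrac{\log t_1}{\log\varep_L}\tv^{c_1}_{L+h}-\tfrac{\log t_2}{\log\varep_L}\tv^{c_2}_{L+h}$ therefore cancels both $\theta^{(1,1)}_{L+h}$-terms and leaves $\theta_{L+h}(\tau,t_1)-\theta_{L+h}(\tau,t_2)$, which is the first line of \eqref{eq:higher}.

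For the second identity I would take $P(X)=X^2$ with $t_1=t_0$, $t_2=\varep_L t_0$, so $P'(X)=2X$ and $P''\equiv 2$. The crucial simplification is that the Siegel theta kernel is $\Gamma_L$-invariant and $c(\varep_L t)=\varep_L\cdot c(t)$, so both $t\mapsto\theta_{L+h}(\tau,t)$ and $t\mapsto\theta^{(1,1)}_{L+h}(\tau,t)$ are periodic in $\log t$ with period $\log\varep_L$; in particular $\theta_{L+h}(\tau,t_2)=\theta_{L+h}(\tau,t_1)=\theta_{L+h}(\tau,t_0)$, the same for $\theta^{(1,1)}_{L+h}$, and $\hv^{c_0}_{L+h}(\tau;2)=2\hv^{c_0}_{L+h}(\tau)=2\vartheta_{L+h}(\tau)$ by \eqref{eq:vartheta}. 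Using $\nu_2-\nu_1=\log\varep_L$ and $\nu_2^2-\nu_1^2=\log\varep_L\cdot\log(t_0^2\varep_L)$, all boundary contributions in Lemma~\ref{lemma:tDP} collapse and one obtains
\[
4\tD\,\hv^{c_0}_{L+h}(\tau;X^2) = -8\pi\log\varep_L\log(t_0^2\varep_L)\,\theta^{(1,1)}_{L+h}(\tau,t_0) - 2\log\varep_L\,\theta_{L+h}(\tau,t_0) + 2\vartheta_{L+h}(\tau).
\]
Subtracting $\log(t_0^2\varep_L)$ times the identity $4\tD\,\tv^{c_0}_{L+h}(\tau)=-8\pi\log\varep_L\,\theta^{(1,1)}_{L+h}(\tau,t_0)$ from \eqref{eq:tD-tv} kills the $\theta^{(1,1)}_{L+h}$-term and yields the second line of \eqref{eq:higher}.

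The argument is essentially bookkeeping once Lemma~\ref{lemma:tDP}, Remark~\ref{rem:theta_id} and \eqref{eq:tD-tv} are available; the only genuine input is the $\log\varep_L$-periodicity of $\theta_{L+h}(\tau,t)$ and $\theta^{(1,1)}_{L+h}(\tau,t)$, which is exactly what makes every boundary term in the $P=X^2$ case telescope. The one point that needs care is tracking the numerical constants (the factors $4$, $8\pi$, and the various occurrences of $\log\varep_L$, $\log t_i$, $\log(t_0^2\varep_L)$) through the cancellations so that they match \eqref{eq:higher} exactly; I would verify these line by line against the normalizations in \eqref{eq:tD-tv} and Remark~\ref{rem:theta_id}.
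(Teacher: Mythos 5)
Your proposal is correct and follows exactly the paper's route: the paper's own ``proof'' is a one-line appeal to Lemma~\ref{lemma:tDP}, Remark~\ref{rem:theta_id} and \eqref{eq:tD-tv}, and your write-up simply carries out that bookkeeping (including the $\Gamma_L$-periodicity of $\theta_{L+h}(\tau,t)$ in $\log t$, which is what collapses the boundary terms and identifies $\hv^{c_0}_{L+h}(\tau;2)$ with $2\vartheta_{L+h}(\tau)$). One point on the constants you flagged: your computation of the second identity is the correct one, and it yields
\[
4\tD\lp \hv^{c_0}_{L+h}(\tau;X^2) - \log(t_0^2\varep_L)\,\tv^{c_0}_{L+h}(\tau)\rp = -2\log\varep_L\,\theta_{L+h}(\tau,t_0) + 2\vartheta_{L+h}(\tau),
\]
which is \emph{not} literally the printed second line of \eqref{eq:higher}, where the factor $4$ multiplies $\log(t_0^2\varep_L)$ inside the parentheses and $\tD$ carries no $4$. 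Since $\tD(A-4cB)\neq 4\tD(A-cB)$, the printed version leaves an uncancelled multiple of $\theta^{(1,1)}_{L+h}(\tau,t_0)$ and appears to be a misprint; your placement of the $4$ is the one consistent with \eqref{eq:tD-tv}, with Remark~\ref{rem:theta_id}, and with the normalization of the first identity. So do not take the mismatch as a sign that your derivation is wrong.
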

Using the same procedure as in Section \ref{subsec:mMt2}, 
one can calculate the Fourier coefficients of the harmonic part of $\hv^{c_0}_{L+h}(\tau; P)$ for any polynomial $P$.
From \eqref{eq:higher}, we see that in order to construct $\tD$-preimage of $\theta_{L+h}$, one would need to construct  $\tD$-preimage of $\vartheta_{L+h}$, which seems to require new ideas.
It would be very interesting to study the Fourier coefficients of such preimage. 

%\todo[inline]{add discussion for $P$ linear $c_i$ not equiv, and $P$ quad $c_i$ equiv.}
\iffalse
In a first step, one takes the constant polynomial $P_0(X)\equiv 1$ to recover the aforementioned result in \cite{Zw12}: $\hv^{c_1, c_2}_{L+h}(\tau; P_0)$ is the preimage of $(t\partial_t \theta_{L+h})(\tau, t_1)-(t\partial_t \theta_{L+h})(\tau, t_2)$. This expression vanishes if one takes $c_1\sim c_2$ (i.e.\ there is a $\gamma\in \Gamma'_L$ with $c_2=\gamma c_1$) and $h\in L^*/L$ such that $\gamma h\in L+h$. %as we get $(t\partial_t \theta_{L+h})(\tau, t_1)=(t\partial_t \theta_{L+h})(\tau, t_2)$. 
So by this choice of parameters, one constructs the Maass cusp form $\hv^{c_1, \gamma c_1}_{L+h}(\tau; P_0)$.

Now we consider the degree one polynomial $P_1(X)= X$. Taking $c_1 \sim c_2$ means $\nu_2=\log\varep_L +\nu_1$ in the chosen parametrization of $\Dc$. Thus we have
\begin{align*}
	4\tD &\hv^{c_1, \gamma c_1}_{L+h}(\tau; P_1)
	= (t \partial_t \theta_{L+h})(\tau,  t_2) \nu_2 -  (t \partial_t \theta_{L+h})(\tau,  t_1)\nu_1=\log\varep_L (t \partial_t \theta_{L+h})(\tau,  t_1).
\end{align*}
Taking $c_1 \not\sim c_2$, we can construct the $\tD$-preimage of $\theta_{L+h}(\tau, t_1) - \theta_{L+h}(\tau, t_2)$: As we identified the preimage of $(t \partial_t \theta_{L+h})(\tau,  t_0) P(\nu_0)$ as $\hv_{L+h}^{c_0,\gamma c_0}$, we can subtract this part, i.e.\ $$\hv_{L+h}^{c_1,c_2}+\frac{\nu_1}{\log\varep_L} \hv_{L+h}^{c_1,\gamma c_1}-\frac{\nu_2}{\log\varep_L} \hv_{L+h}^{c_2,\gamma c_2}$$ is mapped to $\theta_{L+h}(\tau, t_1)-\theta_{L+h}(\tau, t_2)$.
\fi
\bibliography{ref-mock-Maass}{}
\bibliographystyle{alpha}

\end{document}